\title{Robust finite element discretizations for a simplified Galbrun's equation}
\author{Tilman Alemán$^1$, Martin Halla$^2$, Christoph Lehrenfeld$^2$ and Paul Stocker$^2$}
\address{
$^{1}$ Institut für Geometrie und Praktische Mathematik (IGPM), RWTH Aachen University\\
Templergraben 55, 52062 Aachen, Germany\\
e-mail: aleman@igpm.rwth-aachen.de, www.igpm.rwth-aachen.de
\and
$^{2}$ Institute for Numerical and Applied Mathematics, 
University of Göttingen\\
Lotzestr. 16-18, 37083 Göttingen, Germany\\
e-mail: \{m.halla,lehrenfeld,p.stocker\}@math.uni-goettingen.de, https://cpde.math.uni-goettingen.de
}
\keywords{Galbrun's equation, FEM, discontinuous Galerkin, Helmholtz decomposition}
\theoremstyle{plain}
\newtheorem{theorem}{Theorem}
\newtheorem{lemma}[theorem]{Lemma}
\theoremstyle{definition}
\newtheorem{definition}[theorem]{Definition}
\newtheorem{remark}[theorem]{Remark}
\newcommand{\ol}[1]{\overline{#1}}
\newcommand{\ull}[1]{\underline{#1}}
\newcommand{\bpm}{\begin{pmatrix}}
\newcommand{\epm}{\end{pmatrix}}
\newenvironment{equations}{\equation\aligned}{\endaligned\endequation}
\newcommand{\bflow}{\mathbf{b}}
\renewcommand{\u}{\mathbf{u}}
\renewcommand{\v}{\mathbf{v}}
\newcommand{\w}{\mathbf{w}}
\newcommand{\Hdiv}{H(\div)}
\newcommand{\Th}{{\mathcal{T}_h}} 
\newcommand{\Fh}{{\mathcal{F}_h}} 
\newcommand{\Xh}{\mathbb{X}_h}
\newcommand{\dom}{\Omega} 
\newcommand{\nx}{{\mathbf{n}}} 
\newcommand{\Tnorm}[1]{|\!|\!|#1|\!|\!|}
\newcommand{\sjump}[1]{[\![ #1 ]\!]}
\newcommand{\jump}[1]{[\![ #1 ]\!]_\nx}
\newcommand{\bjump}[1]{[\![ #1 ]\!]_{\bflow}}
\newcommand{\avg}[1]{\{\!\!\{#1\}\!\!\}}
\newcommand{\cbdm}[1]{c_{#1}^{\resizebox{0.45cm}{!}{\text{BDM}}}}
\newcommand{\Pibdm}{\Pi_{\resizebox{0.45cm}{!}{\text{BDM}}}}
\newcommand{\Pibdmdg}{\Pi_{\resizebox{0.45cm}{!}{\text{BDM}}}^{\resizebox{0.3cm}{!}{\text{DG}}}}
\newcommand{\cbdmdg}{c_{\resizebox{0.45cm}{!}{\text{BDM}}}^{\resizebox{0.3cm}{!}{\text{DG}}}}
\newcommand{\conv}{\partial_{\bflow}} 
\renewcommand{\div}{\operatorname{div}}
\DeclareMathOperator{\curl}{curl}
\newcommand{\inner}[1]{( #1 )}
\newcommand*{\norm}[1]{\|#1\|}
\newcommand\restr[2]{{ \left.\kern-\nulldelimiterspace #1 \vphantom{\big|} \right|_{#2} }}
\newcommand{\IF}{\mathbb{F}}
\newcommand{\IP}{\mathbb{P}}
\newcommand{\IQ}{\mathbb{Q}}
\newcommand{\IQh}{\mathbb{Q}_h}
\newcommand{\IR}{\mathbb{R}}
\newcommand{\IV}{\mathbb{V}}
\newcommand{\IVh}{\mathbb{V}_{\!h}}
\newcommand{\IW}{\mathbb{W}}
\newcommand{\IWh}{\mathbb{W}_{\!h}}
\newcommand{\IX}{\mathbb{X}}
\newcommand{\IXh}{\mathbb{X}_h}
\newcommand{\calF}{\mathcal{F}}
\newcommand{\calN}{\mathcal{N}}
\newcommand{\calO}{\mathcal{O}}
\newcommand{\calT}{\mathcal{T}}
\newcommand{\bb}{\mathbf{b}}
\newcommand{\bbf}{\mathbf{f}}
\newcommand{\bu}{\mathbf{u}}
\newcommand{\bv}{\mathbf{v}}
\newcommand{\bw}{\mathbf{w}}
\newcommand{\bx}{\mathbf{x}}
\definecolor{pscol}{rgb}{0.8,0,0}
\definecolor{clcol}{rgb}{0,0.5,0.6}
\newcommand{\old}[1]{}
\useunder{\uline}{\ul}{}
\pgfplotsset{
    discard if not/.style 2 args={
        x filter/.append code={
            \edef\tempa{\thisrow{#1}}
            \edef\tempb{#2}
            \ifx\tempa\tempb
            \else
                
            \fi
        }
    }
}
\pgfplotsset{compat=1.16}
\pgfplotsset{tick label style={font=\small},label style={font=\small},legend style={font=\small},}
\pgfplotsset{ width=.49\linewidth}
\newcommand{\logLogSlope}[5]
{

    \pgfplotsextra
    {
        \pgfkeysgetvalue{/pgfplots/xmin}{\xmin}
        \pgfkeysgetvalue{/pgfplots/xmax}{\xmax}
        \pgfkeysgetvalue{/pgfplots/ymin}{\ymin}
        \pgfkeysgetvalue{/pgfplots/ymax}{\ymax}

        \pgfmathsetmacro{\xArel}{#1}
        \pgfmathsetmacro{\yArel}{#3}
        \pgfmathsetmacro{\xBrel}{#1-#2}
        \pgfmathsetmacro{\yBrel}{\yArel}
        \pgfmathsetmacro{\xCrel}{\xArel}

        \pgfmathsetmacro{\lnxB}{\xmin*(1-(#1-#2))+\xmax*(#1-#2)} 
        \pgfmathsetmacro{\lnxA}{\xmin*(1-#1)+\xmax*#1} 
        \pgfmathsetmacro{\lnyA}{\ymin*(1-#3)+\ymax*#3} 
        \pgfmathsetmacro{\lnyC}{\lnyA-#4*(\lnxA-\lnxB)}
        \pgfmathsetmacro{\yCrel}{\lnyC-\ymin)/(\ymax-\ymin)} 

        \coordinate (A) at (rel axis cs:\xArel,\yArel);
        \coordinate (B) at (rel axis cs:\xBrel,\yBrel);
        \coordinate (C) at (rel axis cs:\xCrel,\yCrel);

        \draw[#5] (B) -- (C);
        \addlegendimage{#5};
    }
}
\newcommand{\vel}{\bflow}
\abstract{
Driven by the challenging task of finding robust discretization methods for Galbrun's equation, we investigate conditions for stability and different aspects of robustness for different finite element schemes on a simplified version of the equations. The considered PDE is a second order indefinite vector-PDE which remains if only the highest order terms of Galbrun's equation are taken into account. A key property for stability is a Helmholtz-type decomposition which results in a strong connection between stable discretizations for Galbrun's equation and Stokes and nearly incompressible linear elasticity problems.
}
\begin{document}


\section{Introduction}
In this work we consider the numerical discretization of a model problem motivated by time-harmonic Galbrun-type equations as arising in aeroacoustics \cite{galbrun1931propagation} and helioseismology \cite{hallahohage}. 
Discretization of Galbrun's equation poses significant challenges, especially in the presence of non-uniform flow standard finite element methods will not be robust, see \cite{chabassier}.

Galbrun's equation is a second order partial differential equation, that describes wave propagation in the presence of a steady background flow in a Newtonian fluid, see \cite{maeder202090} for an overview.
The wave propagation is described in terms of a Lagrangian perturbations $\u$, that is the displacement of individual fluid particles. 
With additional rotational and gravitational terms these equations are used to model stellar oscillations.

Instead of these much more complex problems we restrict to a model PDE problem that only keeps their highest order derivatives and introduces a simple zeroth order term:
\begin{equations} \label{eq:strongpde}
        -\nabla (\rho c_s^2~ \div \u) + \conv (\rho\conv \u) 
        -\norm{\vel}_\infty^2 \rho\u
        = \bbf &\quad \text{in} ~ \Omega, \quad         \nx \cdot \u=0 &\quad \text{on}~\partial\Omega.
\end{equations}
where $\conv\u=(\bflow\cdot\nabla)\u$ and $\rho, c_s, \vel$ are density, sound speed, and background flow with $\nx \cdot \vel = 0$ on $\partial\Omega$.
We assume that the conservation of mass $\div (\rho \bb ) = 0$ holds.
Here we treat (homogeneous) boundary conditions for the normal component.
The coupling with transparent boundary conditions, while physically relevant, is outside the scope of this article.

The challenging aspect that remains from the more complex Galbrun-type equations mentioned above is the interplay between two second order differential operators with different sign. Both have a large non-trivial kernel: The grad-div-type operator vanishes for all $\curl$ fields, the streamline diffusion operator for all functions that are constant along the trajectories of the background flow $\bb$. 

\paragraph*{Structure of the manuscript.}
In the remainder of this work we proceed in the following steps: First, to expose the structure of the PDE problem we show the well-posedness of a proper weak formulation in \cref{sec:weakform}. The crucial tool here is a Helmholtz-type decomposition. Second, the corresponding structure can be exploited to find suitable discretizations. 
To this end, in  \cref{sec:disc:cond} we formulate conditions on proper discretization schemes which are sufficient to obtain stability and quasi best approximation results in a suitable discrete energy norms. 
The derived framework allows for several compatible discretization schemes, we introduce four different schemes in \cref{sec:discussion}. 
In \Cref{sec:robustness} we examine major aspects of robustness for these methods in more detail. 
The analysis of the methods is continued in \Cref{sec:DG}, based on the sufficient conditions for stability, which we prepared in \Cref{sec:disc:cond} for a generic finite element method.
We conclude the manuscript with numerical examples 
in \cref{sec:numerics}.

\section{Weak formulation and well-posedness} \label{sec:weakform}
Our assumptions follow \cite{hallahohage}. 
Let $\dom\subset\IR^3$ be a bounded Lipschitz domain.
Let $c_s, \rho, \colon\dom\to\IR$ be continuous and such that
with constants $\ull{c_s}, \ol{c_s}, \ull{\rho}, \ol{\rho} > 0$ and $\bflow\in L^\infty(\dom,\IR^3)$ there holds
\begin{align}
\ull{c_s}\leq c_s \leq \ol{c_s}, \qquad
\ull{\rho}\leq \rho \leq \ol{\rho}, \qquad
\div(\rho\bflow)=0~\text{in }\Omega, \quad\text{and}\quad \bflow\cdot\nx=0~\text{on }\partial\Omega,
\tag{A1}
\end{align}
where $\nx$ denotes the outward pointing normal vector on $\partial\dom$.
This ensures that the distributional derivative operator $\conv\u=(\bflow\cdot\nabla)\u$ is well-defined for $\u\in L^2(\dom)$. 
Depending on the application $\rho$ and $c$ may vary significantly (helioseismology) or not (aeroacoustics).

In the following we will denote inner products (and corresponding norms) on a domain $S$ by $\inner{\cdot,\cdot}_S$ respectively $\norm{\cdot}_S$. For $S=\dom$ we skip the index in inner products, i.e.\ $\inner{\cdot,\cdot} = \inner{\cdot,\cdot}_\dom$. 

Next, we want to consider a proper weak formulation of the problem \eqref{eq:strongpde}. To this end we introduce a proper Hilbert space for the solution
\begin{equation*}
    \IX=\{\u\in L^2(\dom,\IR^3):\ \div\u\in L^2(\dom),\ \conv \u\in L^2(\dom,\IR^3),\ \u\cdot\nx=0 \text{ on } \partial \dom\}.
\end{equation*}
We note that $\IX$ this is the same space that is used in the well-posedness analysis in \cite{hallahohage} for the equations of stellar oscillations. 
A straight-forward weak formulation for $\bbf \in L^2(\dom,\IR^3)$ of \eqref{eq:strongpde} reads as: Find $\u\in \IX$ s.t. \\[-3.5ex] 
\begin{equation}\label{eq:weakpde}\vspace*{0.05cm}  
        -\overbrace{\left[\inner{\rho\conv \u,\conv \u'} 
            + \norm{\vel}_\infty^2 \inner{\rho\u,\u'}\right]
        }^{=:a(\u,\u')} + \overbrace{\inner{\rho c_s^2\div \u,\div \u'}}^{=:b(\u,\u')}
            =\inner{\bbf,\u'}\quad \forall \u'\in \IX.
\end{equation}
The bilinear forms $a(\cdot,\cdot)$ and $b(\cdot,\cdot)$ allow to define a suitable norm on $\IX$ with $\norm{\cdot}_\IX^2 := a(\cdot, \cdot)+b(\cdot, \cdot)$.
We note that $a(\cdot,\cdot)$ alone already defines an inner product on $\IX$ with associated norm $\norm{\cdot}_a$,
and we can therefore consider the following Helmholtz-type decomposition:
\begin{equations}\label{eq:hhdecomp}
    \IX = \IV \!\oplus_{a}\! \IW \text{ with }
    \IV := \ker b = \{ \v \in \IX \mid \div \v = 0\},  \IW := \{ \w \in \IX \mid a(\w,\v) = 0~\forall~\v\in \IV \}.
\end{equations}
It allows us to uniquely decompose every function $\u \in \IX$ as $\u = \v + \w$ with $\v \in \IV$, $\w \in \IW$ and 
\begin{equation}\label{eq:decomp}  
    \norm{\u}_\IX^2 = \norm{\v+\w}_\IX^2 = a(\v,\v) + a(\w,\w) + b(\w,\w).
\end{equation}
Plugging $\u = \v + \w$ and $\u' = \v' + \w'$ with $\v,\v' \in \IV$ and $\w,\w' \in \IW$ into \eqref{eq:weakpde} we obtain the two separated problems\vspace*{-0.2cm}   
\begin{subequations} \label{eq:hhdecomposed}
\begin{align}
     -a(\v,\v') &= \inner{\bbf,\v'} && \forall \v' \in \IV, \label{eq:hhdecomposed1}\\
     - a(\w,\w') + b(\w,\w') &= \inner{\bbf,\w'} && \forall \w' \in \IW. \label{eq:hhdecomposed2}
\end{align}
\end{subequations}
The problem on the divergence-free subspace $\IV$ is obviously well-posed as on $\IV$ we have $\norm{\v}_a = \norm{\v}_\IX$ for all $v \in \IV$ and all prerequisites of the Lax-Milgram theorem are fulfilled, so that $\Vert \v \Vert_a \leq \norm{\bbf}_{\IX'}$. It remains to investigate the problem on the subspace $\IW$. The next lemma treats this problem in a general setting that allows to generalize it to the discrete level below.

\begin{lemma} \label{lem:wellposed1}
Let $a(\cdot, \cdot)$ and $b(\cdot, \cdot)$ be continuous bilinear forms 
on the Hilbert space $(\IX, \norm{\cdot}_\IX)$. 
Assume $a(\cdot,\cdot)$ to be an inner product on $\IX$, such that we can decompose $\IX= \IV\oplus_a \IW$ as in \eqref{eq:hhdecomposed}. 
Let $a(\cdot,\cdot)$ be controlled by $b(\cdot,\cdot)$ on $\IW$, i.e.
\begin{align}\label{eq:control}
    a(\w, \w) \leq c_b^{-1} b(\w,\w) \quad \forall \w\in \IW.
\end{align}
for a constant $c_b>1$.
Then the problem 
\begin{align}\label{eq:SBWP}
        \text{Find } \u\in \IX \text{ s.t. }
        - a(\u,\u') + b(\u,\u') = \inner{\bbf,\u'}\quad \forall \u'\in \IX.
\end{align}
is inf-sup stable, with inf-sup constant $\hat{c}:=\frac{c_{b}-1}{c_{b}+1}$ there holds 
\begin{equation} \label{eq:infsup}
    \forall \u \in \IX, \exists \u' \in \IX, s.t. -a(\u,\u')+b(\u,\u') \geq \hat{c} \norm{\u}_{\IX} \norm{\u'}_{\IX}.
\end{equation}
\end{lemma}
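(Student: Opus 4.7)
The plan is to produce an explicit test function by using the Helmholtz-type decomposition $\IX = \IV \oplus_a \IW$ given in the hypothesis. Write the arbitrary trial function as $\u = \v + \w$ with $\v \in \IV$ and $\w \in \IW$. The natural candidate is $\u' := \w - \v$, i.e.\ we flip the sign of the $\IV$-part while leaving the $\IW$-part untouched. This is the discrete/abstract analogue of the standard sign-flip trick for saddle-point-like problems: the $\IV$-part makes $-a(\cdot,\cdot)$ look the wrong way, so we flip it to turn $-a(\v,\v)$ into $+a(\v,\v)$, while on $\IW$ the $+b(\cdot,\cdot)$ term dominates $-a(\cdot,\cdot)$ by hypothesis \eqref{eq:control}.

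Next I would expand $-a(\u,\u')+b(\u,\u')$. The $a$-orthogonality of the decomposition kills the cross terms $a(\v,\w)$ and $a(\w,\v)$, and $\v\in\ker b$ eliminates every $b$-term involving $\v$. What remains is the clean identity
\begin{equation*}
 -a(\u,\u') + b(\u,\u') \;=\; a(\v,\v) - a(\w,\w) + b(\w,\w),
\end{equation*}
and, since the decomposition is $a$-orthogonal and $b$ vanishes on $\IV$,
\begin{equation*}
 \norm{\u'}_\IX^2 = a(\v,\v) + a(\w,\w) + b(\w,\w) = \norm{\u}_\IX^2.
\end{equation*}
Hence it suffices to prove $a(\v,\v) - a(\w,\w) + b(\w,\w) \geq \hat c\,\norm{\u}_\IX^2$.

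Rearranging, this is equivalent to showing $(1-\hat c)\,a(\v,\v) + (1-\hat c)\,b(\w,\w) \geq (1+\hat c)\,a(\w,\w)$. The term on $\IV$ is nonnegative, so I only need the $\IW$-bound $(1-\hat c)\,b(\w,\w) \geq (1+\hat c)\,a(\w,\w)$. Here the control hypothesis \eqref{eq:control} gives $a(\w,\w)\leq c_b^{-1} b(\w,\w)$, so it is enough to verify $c_b(1-\hat c) \geq 1+\hat c$, which rearranges exactly to $\hat c \leq (c_b-1)/(c_b+1)$, matching the claimed constant.

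The only subtle point is making the right choice of test function and then tracking the optimal convex combination of the two bounds $b(\w,\w) - a(\w,\w) \geq (1-c_b^{-1})b(\w,\w)$ and $b(\w,\w) - a(\w,\w) \geq (c_b-1)a(\w,\w)$ in order to arrive at the sharp constant $\hat c = (c_b-1)/(c_b+1)$ rather than something weaker like $1-c_b^{-1}$; the algebra above shows both inequalities are used with equal weight. Everything else (continuity of $a$ and $b$, well-definedness of the decomposition) is supplied by the hypotheses, so no additional technical work is required.
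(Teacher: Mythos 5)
Your proof is correct and follows essentially the same route as the paper: the same sign-flip test function $\u' = \w - \v$, the same cancellation of cross terms via the $a$-orthogonal decomposition and $\v \in \ker b$, and the same use of the control inequality \eqref{eq:control} on the $\IW$-part. The only cosmetic difference is that the paper splits $b(\w,\w)$ with a convex-combination parameter $\gamma$ and optimizes to get $\hat c = (c_b-1)/(c_b+1)$, whereas you plug in the target constant and verify the resulting inequality directly -- the same algebra organized in reverse.
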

\begin{proof}
    By definition of the norm, we have $\norm{\v}_\IX^2=a(\v,\v)$ for all $\v$ in $\IV=\ker b$, and thus problem \eqref{eq:hhdecomposed1} is well-posed. We turn to \eqref{eq:hhdecomposed2}. For $\gamma\in(0,1)$ we have that for all $\w \in \IW$
\begin{align*}
    - a(\w,\w) + b(\w,\w) &= -a(\w,\w)+ \gamma b(\w,\w) + (1-\gamma)b(\w,\w) \\
                          &\overset{\eqref{eq:control}}{\geq} (\gamma c_b-1)a(\w,\w) + (1-\gamma)b(\w,\w) 
                          \geq \min\{(\gamma c_b-1),(1-\gamma)\}\norm{\w}^2_\IX.
\end{align*}
We can choose $\gamma=\frac{2}{c_b+1}$ such that $\hat{c} = \min\{(\gamma c_b-1),(1-\gamma)\}=\frac{c_{b}-1}{c_{b}+1}$. 
Thus problem \eqref{eq:hhdecomposed2} is well-posed. Merging the well-posedness of both subproblems we obtain the inf-sup condition: For arbitrary $\u = \v + \w \in \IX$, $\v\in\IV$, $\w\in\IW$ with $\u' = - \v + \w \in \IX$ there holds
\begin{equation*}
    - a(\u,\u') + b(\u,\u') = a(\v,\v) - a(\w,\w) + b(\w,\w) \geq \hat{c} [ a(\v,\v) + a(\w,\w) + b(\w,\w) ] = \hat{c} \norm{\u}_{\IX}^2   
\end{equation*}
Finally, \eqref{eq:decomp} yields $\norm{\u}_{\IX} = \norm{\u'}_{\IX}$ so that \eqref{eq:infsup} follows.
\end{proof}
We now turn our attention to the crucial condition \eqref{eq:control}.
\begin{theorem}\label{thm:wellposed2}
    For $\dom$ a bounded Lipschitz domain the condition \eqref{eq:control} in \cref{lem:wellposed1} holds for $a(\cdot,\cdot)$ and $b(\cdot,\cdot)$  as in \eqref{eq:weakpde} under the assumption
\begin{align}
    \norm{c_s^{-1}\vel}_\infty^2\leq C^{-1} < 1,  \tag{A2}\label{A2}
\end{align}
for a constant $C>1$ that is only dependent on the ratios $R_\rho=\ull\rho^{-1}\ol\rho$,  $R_{c_s}=\ull{c_s}^{-1}\ol{c_s}$ and the domain $\Omega$.
\end{theorem}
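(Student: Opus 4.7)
The plan is to reduce the control condition~\eqref{eq:control} to a divergence-inversion estimate via a Bogovskii-type construction. Fix $\w \in \IW$ and set $g := \div\w$; since $\w \cdot \nx = 0$ on $\partial\dom$, one has $\int_\dom g = 0$. I would invoke the Bogovskii right inverse of the divergence on the Lipschitz domain $\dom$ to produce an auxiliary field $\u \in H_0^1(\dom)^3$ satisfying $\div\u = g$ together with the estimate $\norm{\u}_{H^1(\dom)} \leq C_\dom \norm{g}_{L^2(\dom)}$, where the constant $C_\dom$ depends only on $\dom$. Such a $\u$ automatically lies in $\IX$: it obeys $\u\cdot\nx = 0$ on $\partial\dom$, and $\conv\u = (\bflow\cdot\nabla)\u \in L^2(\dom,\IR^3)$ because $\bflow \in L^\infty$ and $\nabla\u \in L^2$.

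Next, I would exploit the $a$-orthogonality that characterises $\IW$: since $\div(\w-\u)=0$, one has $\w - \u \in \IV$, hence $a(\w,\w-\u)=0$, i.e.\ $a(\w,\w) = a(\w,\u)$. Cauchy--Schwarz with respect to the $a$-inner product then yields $a(\w,\w) \leq a(\u,\u)$, and it remains to bound $a(\u,\u)$ by $b(\w,\w)$.

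For that estimate I would control the two contributions to $a(\u,\u)$ pointwise by $|\conv\u|^2 \leq \norm{\bflow}_\infty^2 |\nabla\u|^2$ and use the uniform bound on $\rho$ to get $a(\u,\u) \leq \ol\rho\,\norm{\bflow}_\infty^2 (\norm{\nabla\u}^2 + \norm{\u}^2) \leq \ol\rho\,\norm{\bflow}_\infty^2\, C_\dom^2\, \norm{\div\w}^2$ after inserting the Bogovskii bound. Combining this with the lower bound $b(\w,\w) \geq \ull\rho\,\ull{c_s}^2\,\norm{\div\w}^2$ and applying \eqref{A2} in the pointwise form $\norm{\bflow}_\infty^2 \leq \ol{c_s}^2\,\norm{c_s^{-1}\vel}_\infty^2 \leq C^{-1}\ol{c_s}^2$, I obtain $a(\w,\w) \leq C^{-1} R_\rho\, R_{c_s}^2\, C_\dom^2 \cdot b(\w,\w)$. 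It then suffices to require $C > R_\rho R_{c_s}^2 C_\dom^2$ so that $c_b := C/(R_\rho R_{c_s}^2 C_\dom^2) > 1$ realises~\eqref{eq:control}; the claimed dependence of the threshold on $R_\rho$, $R_{c_s}$ and $\dom$ is then evident.

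The single non-elementary ingredient is the Bogovskii right inverse on a Lipschitz domain delivering $H_0^1$-regularity with operator norm depending only on $\dom$, which I would cite from Galdi's monograph or Dur\'an--Muschietti. The remainder of the argument is careful bookkeeping of the $\rho$- and $c_s$-weights and a standard Cauchy--Schwarz/orthogonality step, so the geometric constant $C_\dom$ of the Bogovskii operator is the only place where $\dom$ enters.
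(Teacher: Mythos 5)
Your proposal is correct, and it proves exactly the estimate the paper derives, namely $a(\w,\w)\leq R_\rho R_{c_s}^2\, C_\dom^2\, \norm{c_s^{-1}\vel}_\infty^2\, b(\w,\w)$ on $\IW$, but via a different mechanism for the key step. Both arguments rest on the same domain-dependent fact — solvability of $\div\u = g$ in $H^1_0(\dom)^3$ with an $L^2$-bound, which is equivalent to the classical $H^1_0/L^2_0$ inf-sup condition for the divergence. The paper works on the dual side: it transfers the classical inf-sup condition to an inf-sup condition for $\tilde b(\v,q)=\inner{\div\v,q}$ with respect to the $\norm{\cdot}_a$-norm over all of $\IX$, and then invokes the abstract saddle-point lemma (Braess, Lemma III.4.2) to conclude that $\div:\IW\to L^2_0$ is an isomorphism with the quantitative lower bound $\norm{\div\w}_\Omega\gtrsim\norm{\w}_a$. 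You work on the primal side: the Bogovskii operator gives an explicit $H^1_0$ competitor $\u$ with $\div\u=\div\w$, and the $a$-orthogonality defining $\IW$ together with Cauchy--Schwarz yields $a(\w,\w)=a(\w,\u)\leq\sqrt{a(\w,\w)}\sqrt{a(\u,\u)}$, i.e.\ $\w$ is the $a$-minimal field with its given divergence. This replaces the citation of the abstract lemma by a short self-contained argument (your constant $C_\dom$ is essentially $c_{\operatorname{div}}^{-1}$, so the resulting thresholds agree). What the paper's formulation buys in exchange is that the inf-sup phrasing is the one mirrored verbatim in the discrete analysis of \cref{sec:DG}, where discrete Stokes inf-sup stability is the working tool; your right-inverse viewpoint also discretizes (the Fortin/BDM interpolants in the DG proof play precisely the role of your Bogovskii field), but that correspondence would need to be set up separately. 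The weight bookkeeping, the use of \eqref{A2} through $\norm{\vel}_\infty\leq\ol{c_s}\norm{c_s^{-1}\vel}_\infty$, and the verification that the threshold depends only on $R_\rho$, $R_{c_s}$ and $\dom$ are all handled correctly.
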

\begin{proof}
First, we note that there holds $\norm{\v}_{a}\leq \sqrt{\ol{\rho}}    \norm{\vel}_\infty \norm{\v}_{H^1(\dom)}$ for $\v\in H^1_0(\dom,\mathbb{R}^d)\subset\IX$.
Now making use of classical inf-sup stability results for the divergence operator in $H^1_0$/$L^2_0$-settings such as they appear in Stokes-like problems, cf. e.g.\ \cite[Ch. III.6]{braess}
, we can deduce an inf-sup condition for $\tilde b(\v, q) =\inner{ \div \v,q}$ w.r.t.\ the $\norm{\cdot}_a$-norm :
 \begin{align}\label{eq:infsuparg}
    \sup_{\v \in \IX} \frac{\tilde b(\v,q)}{\norm{\v}_{a} }
     &  \geq \ol{\rho}^{-\frac12} \norm{\vel}^{-1}_\infty  \sup_{\v \in H^1_0(\dom,\mathbb{R}^d)} \frac{\inner{\div \v, q}}{ \norm{\v}_{H^1(\dom)} }
     \geq \ol{\rho}^{-\frac12} \norm{\vel}^{-1}_\infty  c_{\operatorname{div}} \norm{ q}_\Omega 
    \qquad \forall q\in L^2_0(\dom)
\end{align}
where $c_{\operatorname{div}}$ is the corresponding inf-sup constant of the divergence operator in the $H^1_0$/$L^2$-setting. 
From the inf-sup condition it directly follows, cf. e.g.\ \cite[Lemma III.4.2]{braess}, that $ \tilde{b}: \IW\to L^2_0, \w \mapsto  \div \w$ is an isomorphism and there holds $\norm{\div\w}_\Omega \geq    \ol{\rho}^{-\frac12} \norm{\vel}^{-1}_\infty   c_{\operatorname{div}}\norm{\w }_{a}~\forall \w \in \IW$  and hence
\begin{align*} 
    a(\w,\w) 
    \leq c_{\operatorname{div}}^{-2} R_{c_s}^2 R_\rho \norm{c_s^{-1}\vel}_\infty^{2} b(\w,\w) \quad\forall \w\in\IW.\\[-7.5ex] 
\end{align*}                   
\end{proof}
\begin{remark}
    We note that the smallness assumption on $c_s^{-1}\vel$ in the previous theorem depends also on the fluctuations of the parameter fields $\rho$ and $c_s$. This dependency could also be hidden in the inf-sup constant for a corresponding inf-sup condition with properly weighted norms. However, in view of the discrete setting, where we will also rely on an inf-sup constants of discretizations in the usual $H^1_0$/$L^2_0$-setting, we will stick with this more explicit though potentially less sharp estimate.  
\end{remark}

\section{Generic FEM discretizations and sufficient conditions for stability} \label{sec:disc:cond}

A generic finite element discretization of \eqref{eq:strongpde}, respectively \eqref{eq:weakpde}, replaces $\IX$ with a finite dimensional space $\IXh$ and replaces $a(\cdot,\cdot)$ and $b(\cdot,\cdot)$ by discrete counter parts:
\begin{equation}
    \framebox[0.9\textwidth][c]{
        $\begin{aligned}
        \text{Find } \u_h \in \IXh \text{ s.t. } - a_h(\u_h,\u_h') + b_h(\u_h,\u_h')
            =\inner{\bbf,\u_h'}\quad \forall \u_h'\in \IXh.
        \end{aligned}$
    }
\tag{M}
\label{eq:discreteform}
\end{equation}
We assume that $a_h(\cdot,\cdot)$ defines an inner product on $\IXh$ to that we can apply discrete Helmholtz-type decomposition similar to \eqref{eq:hhdecomp} which takes the form $\IXh = \IVh \oplus_{a_h} \IWh$ with 
\begin{equation}\label{eq:discretehhd}
     \IVh := \{ \v_h \in \IX_h \!\mid\! b_h(\v_h,\w_h) = 0~\forall \w_h \in \IX_h\}, \IWh := \{ w_h \in \IX_h \!\mid\! a_h(\w_h,\v_h) = 0~\forall\v_h\in \IVh \}.
\end{equation}
Every function $\u_h \in \IX_h$ can again be written as $\u_h = \v_h + \w_h$ with unique $\v_h \in \IVh$ and $\w_h \in \IWh$. Plugging $\u_h = \v_h + \w_h$ and $\u_h' = \v_h' + \w_h'$ with $\v_h,\v_h' \in \IVh$ and $\w_h,\w_h' \in \IWh$ into \eqref{eq:discreteform} we obtain the corresponding two separated discrete problems
\begin{subequations} \label{eq:discretehhdd}
\begin{align}
     - a_h(\v_h,\v_h') &= \inner{\bbf,\v_h'} && \forall \v_h' \in \IVh, \label{eq:discretehhdd1}\\
     - a_h(\w_h,\w_h') + b_h(\w_h,\w_h') &= \inner{\bbf,\w_h'} && \forall \w_h' \in \IWh. \label{eq:discretehhdd2}
\end{align}
\end{subequations} 
\old{
We assume $\IX_h$ to be equipped with a proper discrete norm $\norm{\cdot}_{\IX_h}$ w.r.t.\ which $a_h(\cdot,\cdot)$ and $b_h(\cdot,\cdot)$ are continuous. Then we are in a setting that allows to apply \cref{lem:wellposed1}. 
Hence, we need to check symmetry and coercivity of $a_h(\cdot,\cdot)$ w.r.t.\ a discrete norm that is potentially weaker than $\IX_h$, so that it defines an inner product on $\IX_h$.
Furthermore, we require the correspondence of \eqref{eq:control}, i.e.\ $a_h(\w_h, \w_h) \leq c_{b_h}^{-1} b_h(\w_h,\w_h) \quad \forall \w_h \in \IWh$ for a constant $c_{b_h} > 1$.
}
We are hence in a setting that allows to apply \cref{lem:wellposed1} to show discrete stability. 

\begin{lemma} \label{lem:wellposedh}
Let $a_h(\cdot, \cdot)$ and $b_h(\cdot, \cdot)$ be continuous bilinear forms w.r.t.\ the norm $\norm{\cdot}_{\IX_h}^2 := a_h(\cdot, \cdot)+b_h(\cdot, \cdot)$ 
on $\IX_h$.
Further, let $a_h(\cdot,\cdot)$ define an inner product on $\IXh$ and $a_h(\cdot,\cdot)$ be controlled by $b_h(\cdot,\cdot)$ on the subspace $\IWh$ as in \eqref{eq:control}, i.e.
\begin{equation}\label{eq:wellposedh}
a_h(\w_h, \w_h) \leq c_{b_h}^{-1} b_h(\w_h,\w_h) \quad \forall \w_h \in \IWh
\end{equation}
for a constant $c_{b_h} > 1$.
Then \eqref{eq:discreteform} is inf-sup stable, i.e.\ with ${\hat c}_h = (c_{b_h}\!\!-\! 1)/(c_{b_h} \!\!+\! 1)$ there is
\begin{equation} \label{eq:infsuph}
    \forall \u_h \in \IXh, \exists \u_h' \in \IXh, s.t. -a_h(\u_h,\u_h')+b_h(\u_h,\u_h') \geq \hat{c}_h \norm{\u_h}_{\IXh} \norm{\u_h'}_{\IXh}.
\end{equation}
\end{lemma}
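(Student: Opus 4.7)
The plan is to apply \cref{lem:wellposed1} directly in the discrete setting, with $\IX$, $a$, $b$, $\IV$, $\IW$ replaced by their discrete counterparts $\IXh$, $a_h$, $b_h$, $\IVh$, $\IWh$. All hypotheses of \cref{lem:wellposed1} are either assumed in the statement or follow from the preceding setup: $(\IXh, \norm{\cdot}_{\IXh})$ is a finite-dimensional (hence complete) inner-product space, $a_h$ defines an inner product on $\IXh$, both $a_h$ and $b_h$ are continuous w.r.t.\ $\norm{\cdot}_{\IXh}$, the discrete Helmholtz-type decomposition \eqref{eq:discretehhd} provides $\IXh = \IVh \oplus_{a_h} \IWh$, and \eqref{eq:wellposedh} is the exact discrete analogue of \eqref{eq:control}.

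Once these hypotheses are matched, the argument from the proof of \cref{lem:wellposed1} carries over line by line. The key steps are: (i) on $\IVh \subset \ker b_h$ the norm reduces to $\norm{\v_h}_{\IXh}^2 = a_h(\v_h,\v_h)$, so \eqref{eq:discretehhdd1} is immediately well-posed; (ii) on $\IWh$ one splits $-a_h(\w_h,\w_h) + b_h(\w_h,\w_h)$ by introducing a convex combination parameter $\gamma\in(0,1)$ and invoking the control \eqref{eq:wellposedh}, with the optimal choice $\gamma = 2/(c_{b_h}+1)$ delivering coercivity with constant $\hat c_h = (c_{b_h}-1)/(c_{b_h}+1)$; (iii) for arbitrary $\u_h = \v_h + \w_h$, the test function $\u_h' = -\v_h + \w_h$ combines both coercivity estimates, and $a_h$-orthogonality of the splitting gives $\norm{\u_h'}_{\IXh} = \norm{\u_h}_{\IXh}$, from which the inf-sup bound \eqref{eq:infsuph} follows.

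I do not anticipate any genuine obstacle here. The proof of \cref{lem:wellposed1} is purely algebraic and each step is insensitive to dimension, so no density, closure, or compactness argument is needed to pass to the discrete level. In particular, the discrete inf-sup constant $\hat c_h$ inherits the quantitative form of its continuous counterpart without any additional loss due to discretization. The substantive mathematical content that is \emph{not} automatic, and that will require scheme-specific work in later sections, is the verification of \eqref{eq:wellposedh} itself, i.e.\ the discrete analogue of the control proved in \cref{thm:wellposed2} via an inf-sup argument for $\div$ on the chosen finite element pair.
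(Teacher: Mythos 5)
Your proposal is correct and matches the paper exactly: the paper's proof is the single line ``Follows by application of \cref{lem:wellposed1}.'' Your verification that the hypotheses of \cref{lem:wellposed1} are satisfied in the discrete setting (completeness of the finite-dimensional space, the discrete decomposition \eqref{eq:discretehhd}, and the control condition \eqref{eq:wellposedh} as the analogue of \eqref{eq:control}) is precisely the content the paper leaves implicit.
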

\begin{proof}
Follows by application of \cref{lem:wellposed1}.
\end{proof}
Additional assumptions on consistency and continuity in stronger norms yields quasi best approximation results. 
\begin{lemma} \label{lem:cea}  Let the assumptions of \cref{lem:wellposedh} hold. Furthermore, let $\u \in \IX_{\text{reg}} \subset \IX$ be the unique solution to \eqref{eq:strongpde} where $\IX_{\text{reg}}$ is a subspace of $\IX$ with additional regularity, especially a bounded norm $\norm{\cdot}_{\IX_{\text{reg}}}$ stronger than $\norm{\cdot}_{\IX}$ and $\norm{\cdot}_{\IX_h}$. 
If $a_h(\cdot,\cdot)$ and $b_h(\cdot,\cdot)$ are also continuous in the first argument w.r.t.\ $\norm{\cdot}_{\IX_{\text{reg}}}$ for functions in $\IXh + \IX_{\text{reg}}$,
i.e.
$$
-a_h(\u+\u_h,\u_h')+b_h(\u+\u_h,\u_h') \leq c_{c} \Vert \u + \u_h \Vert_{\IX_{\text{reg}}} \norm{ \u_h'}_{\IXh} \qquad \forall~ \u_h, \u_h' \in \IXh,\ \forall \u\in\IX_{\text{reg}}
$$
and a Galerkin orthogonality holds, i.e.\ there is
$$
-a_h(\u,\v_h)+b_h(\u,\v_h) = \inner{\bbf, \v_h} \qquad \forall~\v_h \in \IX_h,
$$
then there additionally holds
\begin{align*}
	\norm{\u-\u_h}_{\IXh}\leq \inf_{\u_I \in \IX_h}\norm{\u-\u_I}_{\IXh} + \hat{c}_{h}c_c \norm{\u-\u_I}_{\IX_{\text{reg}}}.
\end{align*}
\end{lemma}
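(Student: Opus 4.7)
The plan is the standard Strang-type / Céa argument adapted to the inf-sup framework already set up.

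First, fix an arbitrary $\u_I\in\IX_h$ and split the error as $\u-\u_h=(\u-\u_I)+(\u_I-\u_h)$, so that by the triangle inequality
\[
\norm{\u-\u_h}_{\IX_h}\leq \norm{\u-\u_I}_{\IX_h}+\norm{\u_I-\u_h}_{\IX_h}.
\]
The approximation part $\norm{\u-\u_I}_{\IX_h}$ is already in the desired form; the whole task is to control the discrete defect $\norm{\u_I-\u_h}_{\IX_h}$ in terms of $\norm{\u-\u_I}_{\IX_{\text{reg}}}$.

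For the second step, I would invoke \cref{lem:wellposedh} applied to $\u_I-\u_h\in\IX_h$: it furnishes some $\u_h'\in\IX_h$ with
\[
\hat{c}_h\,\norm{\u_I-\u_h}_{\IX_h}\norm{\u_h'}_{\IX_h}\leq -a_h(\u_I-\u_h,\u_h')+b_h(\u_I-\u_h,\u_h').
\]
Now I would plug in the Galerkin orthogonality. Subtracting the discrete equation \eqref{eq:discreteform} from the consistency identity for $\u$ gives $-a_h(\u-\u_h,\u_h')+b_h(\u-\u_h,\u_h')=0$, so that $\u_h$ can be swapped for $\u$ at the price of inserting $\u$ in place of $\u_h$:
\[
-a_h(\u_I-\u_h,\u_h')+b_h(\u_I-\u_h,\u_h')=-a_h(\u_I-\u,\u_h')+b_h(\u_I-\u,\u_h').
\]
Since $\u_I-\u\in\IX_h+\IX_{\text{reg}}$, the strengthened continuity hypothesis (applied with the linear combination $\u_I+(-\u)$, where $\u_I\in\IX_h$ plays the role of the discrete function and $-\u\in\IX_{\text{reg}}$ the regular one) bounds the right-hand side by $c_c\,\norm{\u-\u_I}_{\IX_{\text{reg}}}\norm{\u_h'}_{\IX_h}$. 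Dividing by $\hat{c}_h\,\norm{\u_h'}_{\IX_h}$ yields
\[
\norm{\u_I-\u_h}_{\IX_h}\leq \hat{c}_h^{-1}c_c\,\norm{\u-\u_I}_{\IX_{\text{reg}}},
\]
and combining this with the triangle inequality and then taking the infimum over $\u_I\in\IX_h$ gives the stated bound.

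The only mildly subtle point is that the continuity hypothesis is phrased for $\u+\u_h$ rather than for arbitrary elements of $\IX_h+\IX_{\text{reg}}$; since $a_h$ and $b_h$ are bilinear (in particular linear in the first argument on $\IX_h+\IX_{\text{reg}}$), the estimate transfers to $\u_I-\u$ without loss. Everything else is bookkeeping around \cref{lem:wellposedh} and Galerkin orthogonality, so no substantial obstacle is expected.
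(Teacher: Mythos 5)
Your proof is correct and follows essentially the same route as the paper's own proof: triangle inequality, inf-sup stability from \cref{lem:wellposedh} applied to the discrete error $\u_I-\u_h$, Galerkin orthogonality to replace $\u_h$ by $\u$, and the strengthened continuity hypothesis (extended by bilinearity to $\u_I-\u$) to close the estimate. Note that, exactly like the paper's proof, your argument yields the constant $\hat{c}_h^{-1}c_c$ rather than the $\hat{c}_h c_c$ appearing in the lemma statement, which is evidently a typo in the statement rather than a flaw in either proof.
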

\begin{proof}
After decomposing the discretization error into $\norm{\u-\u_h}_{\Xh} \leq \norm{\u-\u_I}_{\Xh} + \norm{\u_I-\u_h}_{\Xh}$ for any $\u_I \in \IXh$, we derive a bound for $\norm{\u_I-\u_h}_{\Xh}$. Let $\u_h'$ be the supremizer to $\u_I-\u_h$ in \eqref{eq:infsuph}. Then, we apply inf-sup stability, consistency and boundedness yielding
\begin{align*}
    \hat{c}_h ~ \norm{\u_h - \u_I}_{\IXh} \norm{\u_h'}_{\IXh} & \leq -a_h(\u_h - \u_I,\u_h')+b_h(\u_h - \u_I,\u_h') \\
    & = -a_h(\u - \u_I,\u_h')+b_h(\u - \u_I,\u_h') \leq c_c \norm{\u - \u_I}_{\IX_{\text{reg}}}  \norm{\u_h'}_{\IXh}.\\[-7.5ex]  
\end{align*}
\end{proof}

\section{A set of finite element discretizations} \label{sec:discussion}
We introduce four different finite element based discretizations of problem \eqref{eq:weakpde}.
Starting from the generic finite element discretization stated in \eqref{eq:discreteform}, we choose $a_h(\cdot,\cdot),b_h(\cdot,\cdot), \text{ and }\IXh$ for each method.

The domain $\dom$ is subdivided in a non-overlapping shape regular simplicial mesh $\Th$.
We define the following dis- and continuous piecewise polynomial spaces with polynomials up to degree $p$
\begin{align}\label{eq:polspaces}
    \IQ^p(\Th)=\{u_h\in L^2(\Omega)\!: \restr{u_h}{T}\!\in\IP^p(T)\ \forall T\in\Th \},\quad  \IP^p(\Th) = \IQ^p(\Th)\cap C(\dom) 
\end{align}
We use the following notation for the mesh skeleton $ \Fh:=\cup_{T\in\calT_h}\partial T.$ 
On interior facets, i.e.\ $\Fh\setminus \partial\dom$, we define the following DG averages and jumps for a function $\u\in\IXh$
\begin{align*}
    \avg{\u}=\frac12(\u^++\u^-) && \bjump{\u}=\u^+(\vel\cdot\nx^+)+\u^-(\vel\cdot\nx^-) && \jump{\u}=\u^+\cdot \nx^+ + \u^-\cdot \nx^-  .
\end{align*}
Note that $\jump{\cdot}$ measures only the jump in normal direction while $\bjump{\cdot}$ measures the jump in all directions but is weighted with the normal velocity. Especially on facets with \(\vel \cdot \nx = 0\) (locally) the jump $\bjump{\cdot}$ vanishes (locally).   
On facets that coincide with the boundary, i.e. $\Fh\cap \partial\dom$, the averages and jumps are defined as
$\avg{\u}=\u$ and $\jump{\u}=\u\cdot \nx$. 
Finally, we introduce two discrete bilinear forms that contain standard symmetric interior penalty DG terms and Nitsche terms to enforce the boundary condition posed on the normal component of the discrete solution, while recalling that by assumptions $\nx\cdot\vel=0$ on the boundary,
\begin{align*}
    a^{\text{DG}}_h(\u_h, \u'_h) &:= \sum_{T\in\Th} \int_T \rho\left[\conv \u_h \cdot \conv \u'_h + \norm{\vel}_\infty^2 \u_h \cdot \u'_h \right] dx
            + \int_\Fh\rho \frac{\lambda_{\vel}}{h} \bjump{\u_h} \cdot  \bjump{\u'_h} ds\\
            &\qquad-\int_\Fh\rho \avg{\conv \u_h} \cdot \bjump{\u'_h} ds
            -\int_\Fh\rho \avg{\conv \u'_h} \cdot \bjump{\u_h} ds,\\
    b^{\text{DG}}_h(\u_h, \u'_h) &:= \sum_{T\in\Th} \int_T \rho c_s^2\div\u_h \div\u'_h \ dx 
    +\int_\Fh \rho c_s^2\frac{\lambda_{\nx}}{h} \jump{\u_h} \jump{\u'_h} \ ds \\
             &\qquad- \int_\Fh \rho c_s^2\avg{\div \u_h}\jump{\u'_h} \ ds 
            -\int_\Fh \rho c_s^2\avg{\div \u'_h} \jump{\u_h} \ ds.
\end{align*}
with $\lambda_\nx>0$ and $\lambda_\vel>0$.
To exclude errors stemming from boundary approximation, we use high order parametric mesh deformation techniques in the implementation.

Before we discuss four different discretizations, let us recall that the well-posedness of \eqref{eq:weakpde} is closely linked to the inf-sup stability for the divergence operator, as we have seen in the proof of \Cref{thm:wellposed2}.
This carries over to the discrete setting:
Crucial for a successful discretization will be to find a discrete counterpart to the inf-sup stability argument in \eqref{eq:infsuparg}. 
This is again closely linked to the difficulties appearing in the discretization of Stokes-like problems, so we can think of it this way:
For a chosen discrete space $\IX_h$, the problem is to find a stable Stokes pair $(\IX_h,\IQh)$ that fulfills a discrete version of \eqref{eq:infsuparg}, with a proper choice of a discrete divergence operator and pressure space $\IQh$. This motivates the following selection of methods.

\paragraph{M1: An $H^1$-conforming method}
is achieved by choosing finite elements that are continuous across element interfaces and the bilinear forms from problem \eqref{eq:weakpde} are left unaltered,
yielding a consistent method.
Furthermore, it is conforming so that $\IXh\subset \IX$. Summing up:
\vspace*{-0.1cm}
\begin{equation}
    \vspace*{-0.1cm}  
    \framebox[0.9\textwidth][c]{
        $\begin{aligned}
        &\IXh= [\IP^p(\Th)]^d,\quad a_h(\u_h,\u_h')=a(\u_h,\u_h'),\quad b_h(\u_h,\u_h')=b_h^{\text{DG}}(\u_h,\u_h')\\
        &\text{with $a(\cdot,\cdot)$ as in \eqref{eq:weakpde} of the continuous setting.}
        \end{aligned}$
    }
\tag{M1}\label{M1}
\end{equation}
Note that for $H^1$-conforming finite elements the interior penalty terms in the bilinear form $b_h^{\text{DG}}(\cdot,\cdot)$ vanish and only the Nitsche terms remain.
The method corresponds to the Stokes discretization  with  Scott-Vogelius elements given by the pressure space $Q_h=\IQ^{p-1}(\Th)$. This Stokes-pair is stable only for sufficiently large $p$.

\paragraph{M2: An $H^1$-conforming method in pseudo-pressure formulation.}
One way to obtain inf-stable Stokes elements for $H^1$-conforming velocities it to change the pressure space, e.g.\ as it is done in Taylor-Hood discretizations. The Taylor-Hood pair is given by choosing $Q_h=\IP^{p-1}(\Th)$ and is known to be stable for $p\geq 2$. This translates to the \emph{pseudo-pressure}  formulation:    
\vspace*{-0.1cm}
\begin{equation}
    \vspace*{-0.1cm}  
    \framebox[0.9\textwidth][c]{
        $\begin{aligned}
        &\IXh= [\IP^p(\Th)]^d,\quad a_h(\u_h,\u_h')=a(\u_h,\u_h'),\quad b_h(\u_h,\u_h')=b^{\text{pp}}_h(\u_h,\u_h')\\
        &\text{with }b^{\text{pp}}_h(\u_h, \u'_h) := \inner{\rho c_s^2\Pi\div\u_h, \Pi\div\u'_h}_\dom 
            +\inner{\rho c_s^2\frac{\lambda_{\nx}}{h} (\u_h\cdot\nx),(\u'_h\cdot\nx)}_{\partial\dom} \\
        &\hspace{3cm}- \inner{\rho c_s^2{\Pi\div \u_h}, (\u'_h\cdot\nx)}_{\partial\dom} 
             -\inner{\rho c_s^2{\Pi\div \u'_h}, (\u_h\cdot\nx)}_{\partial\dom}.
        \end{aligned}$
    }
\tag{M2}\label{M2}
\end{equation}
Here $\Pi$ is the weighted $L^2$-projection operator onto the space $\IP^{p-1}(\Th)$, with the weight $\inner{\rho c_s^{2}\cdot,\cdot}$. 
To implement the projection the problem can be reformulated using an auxiliary field, the \emph{pseudo-pressure} $p_h$, by computing
$(\u_h, p_h) \in \IXh\times \IP^{p-1}(\Th)$ such that
\begin{align*}
    -a_h(\u_h, \u_h')+\inner{\rho c_s^{2} \div\u_h',p_h}_\dom
    +\inner{\rho c_s^2 (\frac{\lambda_{\nx}}{h} (\u_h\cdot\nx)-p_h),(\u'_h\cdot\nx)}_{\partial\dom} 
    &= \inner{\bbf,\u_h'} &&\forall \u_h' \in \IXh, \\
    \inner{\rho c_s^{2} \div\u_h, q_h}_\dom - \inner{\rho c_s^{2} p_h,q_h}_\dom 
            - \inner{\rho c_s^2(\u_h\cdot\nx), q_h}_{\partial\dom} 
    &=0 &&\forall q_h \in \IP^{p-1}(\Th).
\end{align*}

\paragraph{M3: An $\Hdiv$-conforming discontinuous Galerkin method.}
For this method we choose $\Xh$ to be piecewise polynomials that are normal (but not tangential) continuous across inter-element boundaries. 
The approximation space has less regularity than $\IX$, making the method non-conforming.
Possible choices for the basis of $\Xh$ are the Raviart–Thomas or Brezzi-Douglas-Marini finite element, which are known to provide inf-sup stable Stokes discretizations if combined with proper pressure spaces and DG bilinear forms and norms. 
To account for the missing regularity we use a symmetric interior penalty formulation for the streamline diffusion. 
\vspace*{-0.1cm}  
\begin{equation}
    \vspace*{-0.1cm}  
    \framebox[0.9\textwidth][c]{
        $\begin{aligned}
        &\IXh=\{\u_h\in[\IQ^{p}(\Th)]^d,\ \jump{\u_h}=0 \text{ on all } F\in \Fh \} = [\IQ^{p}(\Th)]^d\cap\Hdiv \\
        &a_h(\u_h,\u_h')=a^{\text{DG}}_h(\u_h,\u_h'),\quad b_h(\u_h,\u_h')=b(\u_h,\u_h') \\
        \end{aligned}$
    }
\tag{M3}\label{M3}
\end{equation}
Note that the jumps $\bjump{\cdot}$ are effectively only jumps in the tangential direction as the discrete functions are all normal-continuous.  

\paragraph{M4: A discontinuous Galerkin method.}
We can further reduce regularity across elements considering a fully discontinuous scheme.
Compared to \eqref{M3}, this requires additional penalty terms for the divergence.
A symmetric interior penalty discontinuous Galerkin discretization is then given by
\vspace*{-0.1cm}  
\begin{equation}
    \vspace*{-0.1cm}  
    \framebox[0.9\textwidth][c]{
        $\begin{aligned}
        &\IXh=\{\u_h\in L^2(\Omega,\IR^d):\ \restr{\u_h}{T}\in\IP^p(T,\IR^d)\ \forall T\in\Th \}\\
        &a_h(\u_h,\u_h')=a^{\text{DG}}_h(\u_h,\u_h'),\quad b_h(\u_h,\u_h')=b^{\text{DG}}_h(\u_h,\u_h') \\
        \end{aligned}$
    }
\tag{M4}\label{M4}
\end{equation}
This method is again inf-sup stable, 
however, it requires more degrees of freedom then the $\Hdiv$-conforming scheme.

\section{Aspects of robustness} \label{sec:robustness}
In this section we want to focus on two additional aspects of robustness which are related to corresponding concepts in linear elasticity problems \cite{FLLS_ARXIV_2020}: volume-locking and gradient-robustness.
We will introduce these concepts and illustrate them on two comparably simple numerical examples for the method \eqref{M1}--\eqref{M4}
\footnote{The numerical examples can be reproduced with the source code and reproduction instructions published at \cite{data}.}.

Let us start with the definition of a volume-locking free discretization which addresses the case $c_s\to \infty$, i.e. the case where the solution ends up in the divergence-free subspace $\IV$.
\begin{definition}\label{def:lf}  
    We call a discretization \eqref{eq:discreteform} \emph{volume-locking free} if for a divergence-free solution $\v \in \IV$ the discretely divergence-free space $\IVh$ provides optimal approximation properties, i.e. 
   \begin{equation}
       \inf_{\v_h^* \in \IVh} \norm{\v - \v_h^*}_{\IXh} \leq C \inf_{\u_h^* \in \IXh} \norm{\v - \u_h^*}_{\IXh} 
   \end{equation}  
   for a constant $C$ that is independent of $h$ and $c_s$.
\end{definition} 
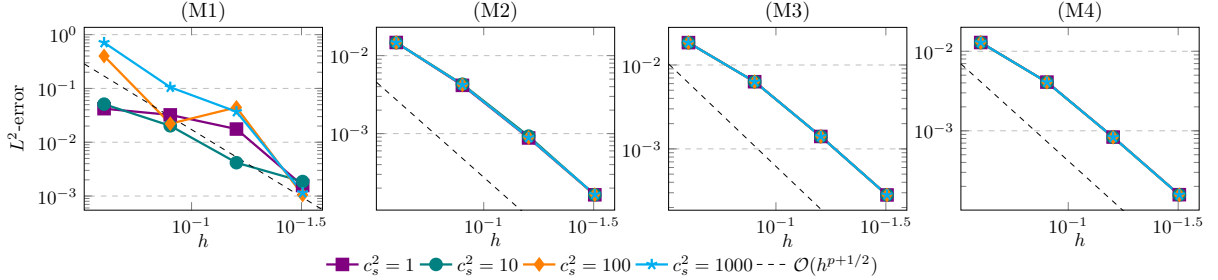
\begin{figure}[!ht]
\begin{center}
    \resizebox{\linewidth}{!}{
    \begin{tikzpicture} [spy using outlines={circle, magnification=4, size=2cm, connect spies}]
        \begin{groupplot}[%
          group style={%
            group name={my plots},
            group size=4 by 1,
            xlabels at=edge bottom,
            ylabels at=edge left,
          },
        legend style={
            legend columns=6,
            at={(-1.5,-0.2)},
            anchor=north,
            draw=none
        },
        xlabel={$h$},
        xlabel style={yshift=1.5ex},
        ylabel={$L^2$-error},
        ymajorgrids=true,
        grid style=dashed,
        cycle list name=paulcolors4,
        width=6cm,height=5cm,
        title style={yshift=-1.5ex},
        ]      
        \nextgroupplot[ymode=log,xmode=log,x dir=reverse, title={\eqref{M1}}]
        \foreach \cs in {1,10,100,1000}{
                \addplot+[discard if not={cs}{\cs}] table [x=h, y=errorH1, col sep=comma] {results/locking.csv}; 
            }
           \logLogSlope{1}{1}{0.8}{2.5}{black,dashed};
        \nextgroupplot[ymode=log,xmode=log,x dir=reverse, title={\eqref{M2}}]
        \foreach \cs in {1,10,100,1000}{
                \addplot+[discard if not={cs}{\cs}] table [x=h, y=errorH1pp, col sep=comma] {results/locking.csv}; 
            }
           \logLogSlope{1}{1}{0.7}{2.5}{black,dashed};
            \nextgroupplot[ymode=log,xmode=log,x dir=reverse, title={\eqref{M3}}]
        \foreach \cs in {1,10,100,1000}{
                \addplot+[discard if not={cs}{\cs}] table [x=h, y=errorHdiv, col sep=comma] {results/locking.csv}; 
            }
           \logLogSlope{1}{1}{0.8}{2.5}{black,dashed};
        \nextgroupplot[ymode=log,xmode=log,x dir=reverse, ,title={\eqref{M4}}]
        \foreach \cs in {1,10,100,1000}{
                \addplot+[discard if not={cs}{\cs}] table [x=h, y=errorDG, col sep=comma] {results/locking.csv}; 
            }
           \logLogSlope{1}{1}{0.8}{2.5}{black,dashed};
       \legend{$c_s^2=1$, $c_s^2=10$, $c_s^2=100$, $c_s^2=1000$, $\mathcal O(h^{p+1/2})$}
        \end{groupplot}
    \end{tikzpicture}}
\end{center}
    \vspace*{-0.5cm} 
    \caption{Study of volume-locking. We consider different values of $c_s$ under mesh refinement with fixed polynomial degree $p=2$.  }
    \label{fig:locking}
\end{figure}
To investigate volume-locking numerically for the discretizations \eqref{M1}--\eqref{M4} we consider the divergence-free solution 
$ \u=
\cos(\pi(x^2+y^2))(-y,x)^T 
$
on a disc with radius 1.
Fixing $\lambda_\vel=\lambda_\nx=10p^2, \vel=0.1(-y,x)^T$ and $\rho=1$ we vary $c_s^2=1,10,100,1000$.
We prescribe homogeneous boundary conditions
and choose $\bbf$ such that $\u$ is the solution. Note that neither the solution $\u$ nor $\bbf$ depend on $c_s$.
In \Cref{fig:locking} we compare convergence in the $L^2(\dom)$-norm in terms of the mesh size $h$ for different values of $c_s$.
Method \eqref{M1} shows signs of volume locking, the discretization is very sensitive to variations in the sound speed and convergence deteriorates for increasing $c_s$. 
Methods \eqref{M2}--\eqref{M4} are free of volume locking. 

Let us now turn our attention to gradient-robustness where the question of the response to gradient forces in the limit $c_s\to\infty$ is addressed. 
Gradient-robustness is linked to a compatibility of the decompositions \eqref{eq:hhdecomposed} and \eqref{eq:discretehhdd}.
If $\bbf = \nabla \phi$ we obtain from partial integration $\inner{\bbf, \bv'} = \inner{\nabla \phi, \bv'} = \inner{- \phi, \div \bv'} = 0$ for all $\bv' \in \IV$. 
Hence $\bu=\bw\in \IW $ and 
testing \eqref{eq:weakpde} with $\bw'=\bw$ and exploiting \eqref{eq:control} we get $\norm{c_s\sqrt{\rho} \div \bw}_\dom^2 \leq (1+c_b^{-1})^{-1} \norm{\phi}_\dom\norm{\div\bw}_\dom$ and therefore $\norm{\div \bw}_\dom=\calO(\ull{c_s}^{-2})$.
From \eqref{eq:infsuparg} we know that $\norm{\bu}_{\IX} \rightarrow 0$ as $\ull{c_s}\rightarrow\infty$. This motivates the following definition:
\begin{definition}\label{def:gr}  
If $\bbf = \nabla \phi$ for $\phi \in H^1(\dom)$, then for $ \ull{c_s}\to\infty$ and $\bu$ the solution to \eqref{eq:strongpde}
there holds $\Vert \bu\Vert_{\IX}\to 0$.
If for the discrete solution $\u_h\in\IXh$ to \eqref{eq:discreteform}   
there also holds $\Vert \bu_h \Vert_{\IXh}\to 0$ we call the discretization \eqref{eq:discreteform} \emph{gradient-robust}.     
\end{definition} 
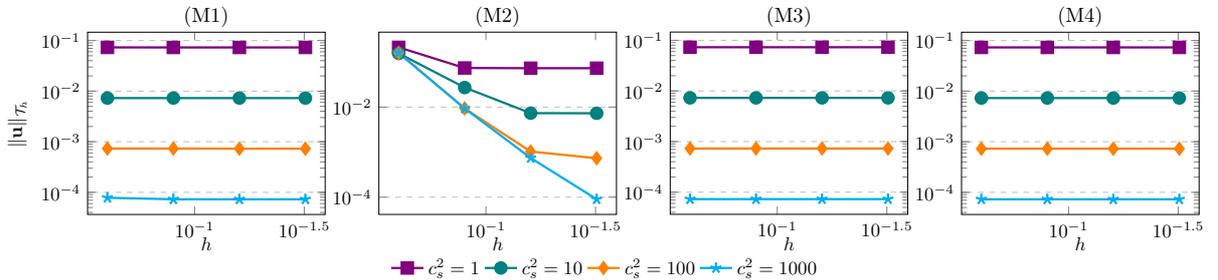
\begin{figure}[!ht]
\begin{center}
    \resizebox{\linewidth}{!}{
    \begin{tikzpicture} [spy using outlines={circle, magnification=4, size=2cm, connect spies}]
        \begin{groupplot}[%
          group style={%
            group name={my plots},
            group size=4 by 1,
            xlabels at=edge bottom,
            ylabels at=edge left,
          },
        legend style={
            legend columns=6,
            at={(-1.5,-0.2)},
            anchor=north,
            draw=none
        },
        xlabel={$h$},
        xlabel style={yshift=1.5ex},
        ylabel={$\norm{\u}_\Th$},
        ymajorgrids=true,
        grid style=dashed,
        cycle list name=paulcolors4,
        width=6cm,height=5cm,
        title style={yshift=-1.5ex},
        ]      
            \nextgroupplot[ymode=log,xmode=log,x dir=reverse, title={\eqref{M1}}]
            \foreach \cs in {1,10,100,1000}{
                \addplot+[discard if not={cs}{\cs}] table [x=h, y=normH1, col sep=comma] {results/rob.csv}; 
            }
            \nextgroupplot[ymode=log,xmode=log,x dir=reverse, title={\eqref{M2}}]
            \foreach \cs in {1,10,100,1000}{
                \addplot+[discard if not={cs}{\cs}] table [x=h, y=normH1pp, col sep=comma] {results/rob.csv}; 
            }
            \nextgroupplot[ymode=log,xmode=log,x dir=reverse, title={\eqref{M3}}]
            \foreach \cs in {1,10,100,1000}{
                \addplot+[discard if not={cs}{\cs}] table [x=h, y=normHdiv, col sep=comma] {results/rob.csv}; 
            }
        \nextgroupplot[ymode=log,xmode=log,x dir=reverse,title={\eqref{M4}}]
            \foreach \cs in {1,10,100,1000}{
                \addplot+[discard if not={cs}{\cs}] table [x=h, y=normDG, col sep=comma] {results/rob.csv}; 
            }
        \legend{$c_s^2=1$,$c_s^2=10$,$c_s^2=100$,$c_s^2=1000$,}
        \end{groupplot}
    \end{tikzpicture}}
\end{center}
    \vspace*{-0.5cm}  
    \caption{Study of gradient-robustness. We consider different values of $c_s$ under mesh refinement with $p=3$, for each method.}
    \vspace*{-0.6cm}  
    \label{fig:gradrob}
\end{figure}
To check numerically if the different methods exhibit gradient-robustness we take $\bbf=\nabla\phi$ with $\phi=x^6+y^6$ and homogeneous boundary conditions.
Thus for $c_s\rightarrow \infty$ we have $\u\rightarrow 0$.
We fix polynomial order $p=3$ and take the same parameters as previously, varying $c_s^2=1,10,100,1000$.
Gradient-robust methods will exhibit $\u_h\rightarrow 0$ as $c_s\rightarrow \infty$ independently of the mesh size.
\Cref{fig:gradrob} shows that only the $H^1$-conforming method with pseudo-pressure formulation is not gradient-robust.
This is no surprise -- the associated help problem is based on a Taylor-Hood velocity-pressure pair, which is known not to be gradient-robust, cf. \cite{10.1137}.

\section{Error analysis for a DG scheme} \label{sec:DG}
In this section we analyse the DG scheme \eqref{M4}. We will see that the analysis automatically covers the cases of the $H(\div,\Omega)$-conforming DG scheme \eqref{M3} as well as the $H^1$-conforming scheme \eqref{M1} in the stable Scott-Vogelius configuration, cf. \cref{rem:carryover} below. We do not treat the methods \eqref{M2} and \eqref{M1} in the general case due to their deficiencies observed before.

\begin{theorem}
    Let $\mathcal T_h$ be a shape regular mesh, i.e.\ $h_K/\rho_K\leq c_{\mathcal T}$. 
    Assume that 
    \begin{equation}
        \vel \text{ is Lipschitz with Lipschitz constant } L. \tag{A3}\label{A3}
    \end{equation}
    Then $\exists c_1,c_2>0$ such that with $\norm{\cdot}_{\frac12,h,\Fh} :=\norm{h_F^{-\frac12}\cdot }_{\Fh}$ (where $h_F$ is the diameter of a facet)
    \begin{equations}\label{eq:dgnorms}
        &a_h(\u_h,\u_h)\geq c_1  ( \norm{\sqrt{\rho}\conv \u_h}_{\Th}^2 + \norm{\sqrt{\rho}\u_h}_{\Th}^2 + \norm{\sqrt\rho  \bjump{\u_h}}_{\frac12,h,\Fh}^2 )\\
        &b_h(\u_h,\u_h)\geq c_2 ( \norm{c_s\sqrt\rho\div\u_h}_\Th^2 + \norm{c_s\sqrt\rho\jump{\u_h}}_{\frac12,h,\Fh}^2 )
    \end{equations}
    for $\lambda_\vel,\lambda_\nx$ large enough.
    Additionally
        $\Tnorm{\u_h}^2_{\IX_h} := a_h(\u_h,\u_h)+b_h(\u_h,\u_h)$
    defines a norm.
\end{theorem}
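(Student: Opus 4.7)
The plan is to establish the two lower bounds independently by a standard symmetric interior penalty (SIPDG) coercivity argument, and then derive the norm property as an immediate corollary.

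For $b_h$, the simpler case, I would expand $b_h(\u_h,\u_h)$ into its volume, Nitsche penalty, and consistency contributions. Since $\div\u_h|_T \in \IP^{p-1}(T)$ is piecewise polynomial, the discrete trace inequality $h_T\|\div\u_h\|_{\partial T}^2 \lesssim \|\div\u_h\|_T^2$ applies directly. Combined with a weighted Young's inequality on the two symmetric consistency terms $\int_\Fh \rho c_s^2 \avg{\div\u_h}\jump{\u_h}\,ds$, this yields contributions that can be absorbed into fractions of the volume term $\|c_s\sqrt\rho\div\u_h\|_\Th^2$ and of the penalty $\lambda_\nx\|c_s\sqrt\rho\jump{\u_h}\|_{\frac12,h,\Fh}^2$, provided $\lambda_\nx$ is chosen large enough (with a threshold depending on $R_{c_s}$, $R_\rho$ and the shape-regularity constant $c_{\mathcal T}$).

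For $a_h$ the same blueprint must overcome the fact that $\conv\u_h = (\bflow\cdot\nabla)\u_h$ is not piecewise polynomial unless $\bflow$ is piecewise constant. This is where assumption \eqref{A3} enters: on each element $T$ I would split $\bflow = \bar\bflow_T + (\bflow - \bar\bflow_T)$ with $\bar\bflow_T$ a pointwise value of $\bflow$ inside $T$, so that $|\bflow-\bar\bflow_T|_{L^\infty(T)}\leq L\, h_T$. The polynomial part $\bar\bflow_T\cdot\nabla\u_h$ then obeys the usual discrete trace inequality, and combining with the polynomial inverse estimate $\|\nabla\u_h\|_T \lesssim h_T^{-1}\|\u_h\|_T$ to absorb the Lipschitz remainder yields a hybrid trace bound of the form
\begin{equation*}
  h_T\|\conv\u_h\|_{\partial T}^2 \leq C\bigl(\|\conv\u_h\|_T^2 + L^2\|\u_h\|_T^2\bigr).
\end{equation*}
A weighted Young's inequality applied to $\int_\Fh\rho\avg{\conv\u_h}\cdot\bjump{\u_h}\,ds$ then distributes the consistency error among a fraction of $\|\sqrt\rho\conv\u_h\|_\Th^2$, a fraction of $\|\vel\|_\infty^2\|\sqrt\rho\u_h\|_\Th^2$, and a fraction of the penalty $\lambda_\vel\|\sqrt\rho\bjump{\u_h}\|_{\frac12,h,\Fh}^2$; taking $\lambda_\vel$ large enough closes the argument.

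The main obstacle I expect is precisely this treatment of the variable-coefficient convection on interior facets: the polynomial-plus-remainder splitting is essential, and care is required to verify that the $L^2$-mass contribution produced by the inverse estimate on the Lipschitz remainder of $\bflow$ can indeed be absorbed by the $\|\vel\|_\infty^2 \|\sqrt\rho\u_h\|_\Th^2$ mass term already present in $a_h$. The resulting constants $c_1, c_2$ then inherit a dependence on $\|\bflow\|_\infty$, $L$, $R_\rho$, $R_{c_s}$ and $c_{\mathcal T}$. Once both bounds in \eqref{eq:dgnorms} are established, the norm property of $\Tnorm{\cdot}_{\IX_h}$ is immediate: $\Tnorm{\u_h}_{\IX_h}=0$ forces in particular $\|\sqrt\rho\u_h\|_\Th=0$ and hence $\u_h\equiv 0$, while homogeneity and the triangle inequality are inherited from the bilinearity of $a_h, b_h$ and from the fact that each term on the right-hand side of \eqref{eq:dgnorms} is a squared seminorm.
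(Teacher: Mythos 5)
Your proposal is correct and takes essentially the same route as the paper's proof: the paper also freezes $\vel$ at a point of each element (the element center $\bx_T$), combines the discrete trace inequality for the polynomial part $(\tilde\vel\cdot\nabla)\u_h$ with the inverse inequality on the Lipschitz remainder to obtain the hybrid bound $h^{1/2}\norm{\sqrt\rho\conv\u_h}_{\partial T}\leq c_{\mathrm{tr}}p\sqrt{R_\rho}\left(2c_{\mathrm{inv}}L\norm{\sqrt\rho\u_h}_{T}+\norm{\sqrt\rho\conv\u_h}_{T}\right)$, and then absorbs the consistency terms via Young's inequality into the volume, mass and penalty terms for $\lambda_\vel,\lambda_\nx$ large enough, with the $b_h$ bound following analogously from the plain trace inequality on the piecewise polynomial $\div\u_h$. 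The absorption concern you flag for the $L^2$-mass contribution is resolved in the paper exactly as you anticipate: the coefficient becomes $\norm{\vel}_\infty^2-\frac{4c_{\mathrm{inv}}^2L^2c_{\mathrm{tr}}^2p^2R_\rho}{a\lambda_\vel}$, which is kept positive by taking $\lambda_\vel$ sufficiently large.
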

\begin{proof}
    Let $\bx_T$ denote the center of an element $T\in\Th$ and $\tilde\vel=\vel(\bx_T)$.
    Thanks to \eqref{A3}, we have that
    $ \norm{\vel-\tilde\vel}_T\leq Lh.$
    We will use this to reduce the convection operator to a discrete setting so we can apply the
    discrete trace inequality $\norm{\u_h}_{\partial K}\leq c_{\mathrm{tr}}ph^{-\sfrac12}\norm{\u_h}_K$ that holds for any discrete $\u_h\in\IP^p(K)$.
    Using additionally the inverse inequality $\norm{\nabla\u_h}_{K}\leq c_{\mathrm{inv}}h^{-1}\norm{\u_h}_K$ we get that 
    for $\u_h\in\IP^p(K)$ we have 
    \begin{align*}
        h^{\sfrac12}\norm{\sqrt\rho\conv\u_h}_{\partial T} 
        &\leq \sqrt{\ol\rho}\left(h^{\sfrac12}\norm{((\vel-\tilde\vel)\cdot \nabla)\u_h}_{\partial T} + h^{\sfrac12} \norm{(\tilde\vel\cdot\nabla)\u_h}_{\partial T}\right)\\
        &\leq c_{\mathrm{tr}}p\sqrt{\ol\rho}\left(Lh\norm{\nabla\u_h}_{T} +  \norm{(\tilde\vel\cdot\nabla)\u_h}_{T}\right)\\
        &\leq c_{\mathrm{tr}}p\sqrt{\ol\rho}\left(c_{\mathrm{inv}}L\norm{\u_h}_{T} +  \norm{((\tilde\vel-\vel)\cdot\nabla)\u_h}_{T} + \norm{(\vel\cdot\nabla)\u_h}_{T}\right)\\
        &\leq c_{\mathrm{tr}}p\sqrt{R_\rho}\left(2c_{\mathrm{inv}}L\norm{\sqrt\rho\u_h}_{T} + \norm{\sqrt\rho(\vel\cdot\nabla)\u_h}_{T}\right).
    \end{align*}
    And therefore
\begin{align*}
    a&_h^{\text{DG}}(\u_h,\u_h)
         \geq \norm{\sqrt{\rho}\conv \u_h}_{\Th}^2+ \norm{\vel}_\infty^2\norm{\sqrt{\rho}\u_h}_{\Th}^2 - \frac{h}{a\lambda_\vel}\norm{\sqrt\rho\avg{\conv \u_h}}_\Fh^2 +  (1-a)\frac{\lambda_\vel}{h}\norm{\sqrt\rho  \bjump{\u_h}}_\Fh^2 \\
          &\geq (1-\frac{c_{\mathrm{tr}}^2p^2R_\rho}{a\lambda_\vel}) \norm{\sqrt{\rho}\conv \u_h}_{\Th}^2
          + (\norm{\vel}_\infty^2-\frac{4c_{\mathrm{inv}}^2L^2c_{\mathrm{tr}}^2p^2R_\rho}{a\lambda_\vel})\norm{\sqrt{\rho}\u_h}_{\Th}^2+  (1-a)\frac{\lambda_\vel}{h}\norm{\sqrt\rho  \bjump{\u_h}}_\Fh^2
\end{align*}
for $0.5<a<1$.
With $h^{\sfrac12}\norm{c_s\sqrt\rho\div\u_h}_{\partial T} 
    \leq c_{\mathrm{tr}}p R_{c_s}\sqrt{R_\rho}\norm{c_s\sqrt\rho\div\u_h}_{T}$
    the result for $b_h^{DG}(\cdot,\cdot)$ follows analogously.
\end{proof}

\begin{theorem} 
    For $\dom$ being a polygonal domain or a domain with smooth boundary 
    the condition \eqref{eq:wellposedh} in \cref{lem:wellposedh} holds for \eqref{M4} under assumption \eqref{A2} (for a new constant $C$).
\end{theorem}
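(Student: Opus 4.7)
The plan is to follow the argument of \cref{thm:wellposed2} at the discrete level. Since \eqref{eq:wellposedh} only needs to hold on $\IWh$, the task reduces to establishing a discrete inf-sup condition that plays the role of \eqref{eq:infsuparg}. Concretely, I choose as pressure space $\IQh := \IP^{p-1}(\Th)\cap L^2_0(\dom)$ and define the discrete (DG) divergence
\[
\tilde b_h(\v_h,q_h) := \sum_{T\in\Th}\int_T \div\v_h\,q_h\,\dd x - \int_{\Fh}\jump{\v_h}\avg{q_h}\,\dd s.
\]
A preparatory step is to verify $\IVh = \{\v_h\in\IXh : \tilde b_h(\v_h,q_h)=0\ \forall q_h\in\IQh\}$: the inclusion $\subseteq$ is immediate since $\v_h\in\IVh$ implies $\div\v_h\equiv 0$ on each $T$ and $\jump{\v_h}\equiv 0$ on each $F$ (using $b_h(\v_h,\v_h)=0$ together with the $b_h$-coercivity from the previous theorem), while the reverse inclusion is obtained by testing with piecewise constants first and then with polynomials localized around single facets.

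The central step is the discrete inf-sup estimate
\[
\sup_{\v_h\in\IXh}\frac{\tilde b_h(\v_h,q_h)}{\norm{\v_h}_{a_h}} \ \geq\ c_{\div,h}\,\ol{\rho}^{-1/2}\norm{\vel}_\infty^{-1}\norm{q_h}_\dom\qquad\forall q_h\in\IQh.
\]
I would prove it by combining (i) the upper bound $\norm{\v_h}_{a_h} \leq C\sqrt{\ol\rho}\norm{\vel}_\infty\,\Tnorm{\v_h}_*$ with $\Tnorm{\v_h}_*^2 := \sum_{T\in\Th}\bigl(\norm{\nabla\v_h}_T^2+\norm{\v_h}_T^2\bigr)+h^{-1}\norm{\bjump{\v_h}}_{\Fh}^2$, obtained from the pointwise bound $|\conv\v_h|\leq\norm{\vel}_\infty|\nabla\v_h|$ together with Cauchy--Schwarz and the discrete trace inequality already used in the previous theorem; and (ii) the standard DG Stokes inf-sup for the pair $([\IP^p(\Th)]^d,\IP^{p-1}(\Th)\cap L^2_0)$ with respect to the broken $H^1$-norm, which holds on polygonal and smooth domains by the classical Fortin construction extending a continuous Stokes-stable pair.

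Once the discrete inf-sup is available, the argument is parallel to the continuous one: since $\IVh=\ker\tilde b_h$ and $\IWh=\IVh^{\perp_{a_h}}$, the mapping $\tilde b_h\colon\IWh\to\IQh'$ is an isomorphism, and therefore $\norm{\w_h}_{a_h}\leq c_{\div,h}^{-1}\sqrt{\ol\rho}\norm{\vel}_\infty\sup_{q_h\in\IQh}\tilde b_h(\w_h,q_h)/\norm{q_h}_\dom$. Cauchy--Schwarz together with the discrete trace inequality gives $|\tilde b_h(\w_h,q_h)|\leq C(\norm{\div\w_h}_{\Th}+h^{-1/2}\norm{\jump{\w_h}}_{\Fh})\norm{q_h}_\dom$, and the coercivity bound of the previous theorem controls the parenthesis by $C\,\ull\rho^{-1/2}\ull c_s^{-1}\sqrt{b_h(\w_h,\w_h)}$. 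Squaring and collecting constants yields $a_h(\w_h,\w_h)\leq C c_{\div,h}^{-2}R_\rho R_{c_s}^2\norm{c_s^{-1}\vel}_\infty^2\, b_h(\w_h,\w_h)$, which under \eqref{A2} with $C$ large enough forces $c_{b_h}>1$, i.e.\ \eqref{eq:wellposedh}.

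The hard part is (ii): the inf-sup on $\IXh$ must be proven in the DG setting where the normal boundary condition $\u\cdot\nx=0$ is imposed only weakly via the Nitsche term in $a_h^{\text{DG}}$. Consequently the Fortin operator has to respect this relaxed constraint rather than enforcing it strongly. On polygonal domains and on domains with smooth boundary such an operator is available by combining standard Fortin constructions for DG Stokes (e.g.\ a Scott--Zhang operator followed by facet-moment corrections) with a local boundary adjustment, which explains the restriction on $\dom$ in the hypothesis.
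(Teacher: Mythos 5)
Your overall strategy---transfer the proof of \cref{thm:wellposed2} to the discrete level by establishing a discrete inf-sup condition for a divergence-type form and then converting it into \eqref{eq:wellposedh} via \eqref{eq:dgnorms}---is the same as the paper's, and your step (i) as well as the final bookkeeping are sound. (A side remark on notation: in this paper $\IP^{p-1}(\Th)$ is the \emph{continuous} space; you presumably mean the broken space $\IQ^{p-1}(\Th)\cap L^2_0(\dom)$.) The proof breaks, however, at the step where you identify $\IVh$ with the kernel of your single-multiplier lifted DG divergence form $\tilde b_h(\v_h,q_h)=\sum_{T}\inner{\div\v_h,q_h}_T-\int_{\Fh}\jump{\v_h}\avg{q_h}\,ds$. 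For \eqref{M4}, the space $\IVh=\ker b_h$ consists of functions that are \emph{both} element-wise divergence-free \emph{and} normal-continuous with vanishing normal trace on $\partial\dom$, because the penalty terms in $b_h^{\text{DG}}$ enforce both (this is exactly the coercivity \eqref{eq:dgnorms}). Your kernel is strictly larger: the map $\v_h\mapsto(\div_{\Th}\v_h,\jump{\v_h})$ has a range of dimension on the order of $n_T\dim\IP^{p-1}(T)+n_F\dim\IP^{p}(F)$, whereas your form imposes only $\dim\IQh\approx n_T\dim\IP^{p-1}(T)$ constraints. Your proposed verification also does not work: for piecewise constant $q_h$ a short computation shows that all divergence and jump contributions cancel and
\begin{equation*}
\tilde b_h(\v_h,q_h)\;=\;\sum_{F\ \text{interior}}\bigl(q_h^+-q_h^-\bigr)\int_F\avg{\v_h}\cdot\nx^+\,ds,
\end{equation*}
so piecewise-constant testing only sees the \emph{average} normal component, not $\div_{\Th}\v_h$ or $\jump{\v_h}$ separately.

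This is not a cosmetic issue, because everything downstream relies on it. Since $\IVh\subsetneq\ker\tilde b_h$, the Helmholtz complement $\IWh=\IVh^{\perp_{a_h}}$ contains nonzero elements of $\ker\tilde b_h$ (decompose $\ker\tilde b_h=\IVh\oplus_{a_h}(\ker\tilde b_h\cap\IWh)$); for such $\w_h$ your central inequality $\norm{\w_h}_{a_h}\leq c_{\div,h}^{-1}\sqrt{\ol\rho}\,\norm{\vel}_\infty\sup_{q_h}\tilde b_h(\w_h,q_h)/\norm{q_h}_\dom$ is false, as the supremum vanishes while $\norm{\w_h}_{a_h}>0$. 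The DG Stokes inf-sup you invoke in (ii) is true, but it only yields an isomorphism between $(\ker\tilde b_h)^{\perp_{a_h}}$ and the dual of the pressure space, not between $\IWh$ and that dual. The paper avoids exactly this trap by keeping the two constraints separate: it introduces a facet multiplier space $\IF_h$ alongside the pressure space $\IQ_h=\IQ^{p-1}(\Th)$, so that $\IVh=\ker\tilde b_h$ holds \emph{by construction} for the two-multiplier form $\tilde b_h(\u_h,(q_h,\mu_h))=\tilde d_h(\u_h,q_h)+\tilde n_h(\u_h,\mu_h)$, and then proves a nested inf-sup condition in the sense of two-fold saddle point problems \cite{zbMATH05942045}: first for the jump form $\tilde n_h$ via an element-wise BDM interpolation, then for the divergence form $\tilde d_h$ on $\ker\tilde n_h$ (which is the $\Hdiv$-conforming space), and finally combines the two. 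To repair your argument you would either have to adopt this two-multiplier structure, or enrich your pressure space by facet unknowns---which amounts to the same thing.
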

\begin{proof}
The proof follows along the lines of its counterpart in the continuous setting. 
 The discrete subspaces of the Helmholtz-type decomposition are $\IVh = \ker b_h$  and $\IWh = \{\w_h\in\IXh \mid a_h(\w_h,\v_h) = 0~ \forall \v_h \in\IVh \}$. 
 Next, we rewrite 
 $
 \IVh =\ker b_h$ as $ \IVh = \ker \div_{\Th}\cap \ker\operatorname{tr}_{\jump{\cdot}} = \ker \tilde b_h$ 
 whereat $\div_{\Th}\colon \IXh \to L^2(\dom)$ is the broken, i.e.\ element-wise, divergence operator and $\operatorname{tr}_{\jump{\cdot}}\colon \IXh\to L^2(\Fh)$ the trace jump operator on the skeleton.
We introduce $\tilde b_h$ as the bilinear form corresponding to these operators  with
\begin{align*}
    \tilde b_h\colon \IXh \times (\IQ_h \times \IF_h) \to \mathbb{R}, \qquad 
    \tilde b_h(\u_h, (q_h,\mu_h) ) & =\tilde d_h (\u_h, q_h) + \tilde n_h (\u_h, \mu_h)        \\
    \tilde d_h(\u_h, q_h )  =\sum_{T\in\Th} \inner{ 
        \div \u_h, q_h }_T,\qquad 
    \tilde n_h(\u_h, \mu_h ) & = \inner{\jump{\u_h},\mu_h}_{\frac12,h,\Fh}. 
\end{align*}
Here, the spaces  $\IQ_h$ and $\IF_h$ are taken as the range of $\IX_h$ under $\div_{\Th}$ and $\operatorname{tr}_{\jump{\cdot}}$, respectively:
$
\IQ_h 
 := 
 \IQ^{p-1}(\Th),~ 
   \IF_h := \{\mu \in L^2(\Fh), \mu \in \IP^p(F),\forall F \in \calF_h \}.
$
Here, $\IQ_h$ is naturally equipped with the $L^2(\dom)$-norm while $\IF_h$ is equipped with the $\norm{\cdot}_{\frac12,h,\Fh}$-norm.     
Now, by construction we see $\ker b_h = \ker \tilde b_h$. Next, we want to obtain inf-sup stability of $\tilde b_h$ in order to control the $\norm{\cdot}_{a_h}$-norm on $\IWh$ by the (scaled) element-wise divergence-terms and the normal jumps. 

For two-fold saddle point problems, see \cite[Thm. 3.1]{zbMATH05942045}, it is known that in order to show inf-sup stability for $\tilde b_h (\cdot,\cdot)$ it suffices to show inf-sup stability of  $\tilde n_h(\cdot,\cdot)$ and inf-sup stability of  $\tilde d_h(\cdot,\cdot)$ on the kernel of $\tilde n_h(\cdot,\cdot)$ (in corresponding norms).

\noindent\underline{1. Inf-sup stability of $\tilde n_h$:} 
Given $\mu_h \in \IF_h$ we construct an element-wise BDM interpolation $\u_h \in \IX_h$, cf. \cite{fortin2013mixed}, where on each facet with aligned elements $T_+$ and $T_-$ we prescribe $\u_h|_{T_{\pm}} \cdot \nx_{\pm} = \pm \frac12 \mu_h$ so that $\jump{\u_h} = \mu_h$ on $\Fh$.
Additionally we demand $(\u_h,\varphi)_T=0$ for all $\varphi \in \calN^{k-2}(T) = [\IP^{p-2}(T)]^d + [\IP^{p-2}(T)]^d\times \bx$ yielding a unique interpolant $\u_h \in \IXh$.
By standard scaling arguments, \cite[Proposition 4.3.5]{alemanmaster}, there holds 
$$
\ol\rho^{-\frac12}\norm{\vel}_{\infty}^{-1} \norm{\u_h}_{a_h} \leq \cbdm{1} \norm{\mu_h}_{\frac12,h,\Fh} = \big( \sum_{T \in \Th}  \norm{h_F^{-\frac12} \mu_h}_{\partial T}^2 \big)^{\frac12}
$$
so that $\forall~\mu_h\in \IF_h$, $\exists~\u_h \in \IX_h$ with 
$$
\tilde n_h(\u_h,\mu_h)
=\inner{\jump{\u_h}, \mu_h}_{\frac12,h,\Fh}
= \norm{\mu_h}_{\frac12,h,\Fh}^2    \geq (\cbdm{1})^{-1}~\ol\rho^{-\frac12}\norm{\vel}_{\infty}^{-1} \norm{\u_h}_{a_h} \norm{\mu_h}_{\frac12,h,\Fh}.
$$

\noindent\underline{2. Inf-sup stability of $\tilde d_h$ on $\ker\tilde n_h$ :} 
Now we need to show inf-sup stability for $d_h(\cdot,\cdot)$ on the kernel of $\tilde n_h(\cdot,\cdot)$.
The kernel is precisely given by the BDM finite element 
$\ker \tilde n_h = \{\u_h\in\IX_h\mid \tilde n_h(\u_h,\mu_h)=0,\ \forall\mu_h\in\IF_h\}= [\IQ^{p}(\Th)]^d\cap\Hdiv$.
Similar to the continuous level we have 
$$\sqrt{\ol{\rho}}^{-2}\norm{\vel_h}_\infty^{-2} \norm{\u_h}_{a_h}^2\leq \sum_{T\in\Th}\norm{\nabla \u_h}_T^2+\norm{\u_h}_T^2 +\norm{\sjump{\u_h} }_{\frac12,h,\Fh}^2  =:\norm{\u_h}_{1,h}^2  \text{ for }\u_h\in \ker \tilde n_h.\vspace*{-0.2cm} $$
where $\sjump{\cdot}$ is the standard DG jump operator, i.e.\ similar to $\bjump{\cdot}$ but without the $\vel$-weighting.    
Then using inf-sup condition results in discrete $H^1$ norms based on a BDM interpolation $\u_h^{\star}=\Pibdm \u^{\star}$ of the $\div$-supremizer $\u^\star$ in $H_0^1(\dom,\mathbb{R}^d)$, see e.g.\ \cite[Proposition 10]{zbMATH01742006}, we obtain $\forall q\in \IQ_h$
\begin{align*}
    \ol{\rho}^{\frac12}\norm{\vel_h}_\infty \!\!\sup_{\u_h \in \ker\tilde n_h}\!\!\frac{\tilde d_h(\u_h, q_h)}{\norm{\u_h}_{a_h}} 
    & \geq \!\sup_{\u_h \in \ker\tilde n_h}\!\!\! \frac{\tilde d_h(\u_h, q_h)}{\norm{\u_h}_{1,h}}
    \geq
    \frac{ \inner{\div \u_h^\star, q_h} }{\norm{\u_h^\star}_{1,h}}
    \geq
   \cbdm{2} \frac{ \inner{\div \u^\star, q_h} }{\norm{\u^\star}_{H^1(\dom)}}
    \geq c_{\operatorname{div},h} \norm{q_h}_\Omega 
\end{align*} 
with $c_{\operatorname{div},h}=\cbdm{2} c_{\operatorname{div}}$, where $\cbdm{2}$ is the inverse of the continuity constant of $\Pibdm$ in $\norm{\cdot}_{1,h}$.    
\\ 
\noindent\underline{3. Inf-sup stability of $\tilde b_h$:} 
We are now able to apply \cite[Thm. 3.1]{zbMATH05942045} to deduce 
\begin{align*}
    \sup_{\u_h \in \IX_h}\frac{\tilde b_h(\u_h, (q_h,\mu_h))}{\norm{\u_h}_{a_h}} 
    \geq c\ol{\rho}^{-\frac12} \norm{\vel}^{-1}_\infty (\norm{q_h}_\Omega + \norm{\mu_h}_{\frac12,h,\Fh} )
    \qquad \forall q\in \IQ_h
\end{align*} 
which allows to control $\norm{\w_h}_{a_h}$ by $\div_{\Th}$ and normal jump terms using \cite[Ch. III, Lemma 4.2]{braess}
\begin{align*}
    \norm{\w_h}_{a_h}^2 \leq C R_{c_s}^2R_\rho \norm{c_s^{-1}\vel}^{2}_\infty( \norm{c_s\sqrt\rho\div\w_h}_\Omega^2 + \norm{c_s\sqrt\rho\jump{\w_h}}_{\frac12,h,\Fh}^2 ) \quad \forall \w_h\in\IW_h.
\end{align*}
We obtain the desired result by applying \eqref{eq:dgnorms}.
\end{proof}

\begin{remark}\label{rem:cea}  
    The previous two results can be combined with even more obvious properties, such as consistency and continuity of $a_h(\cdot,\cdot)$ and $b_h(\cdot,\cdot)$ and interpolation estimates to apply \cref{lem:cea} and obtain the following error bound for the discretization error
    \begin{equation}
        \norm{\u - \u_h}_{\IXh} \leq C h^{l-1}  \Vert \u\Vert_{H^{l}(\dom)}       
    \end{equation}
    with $l = \min\{p+1,m\}$ with $\u\in H^{m}(\dom,\mathbb{R}^d),~m\geq 2,$ the exact solution.
\end{remark}  
\begin{remark}\label{rem:carryover}  
    The previous remark as well as the previous two results for \eqref{M4} directly carry over for \eqref{M3}. In the case of \eqref{M3} the proofs simplify as the normal jump terms vanish. Furthermore for $H^1$-conforming finite element spaces $\IXh$ that in combination with $\IQ_h = \div \IXh$ correspond to inf-sup stable Stokes elements, both statements remain true with even further simplifications in the proofs. 
\end{remark} 
\begin{remark}[Robustness] \eqref{M3} and \eqref{M4} are locking-free and gradient-robust in the sense of \cref{def:lf,def:gr}.  
    Applying an element-wise BDM interpolator $\Pibdmdg$, cf. \cite{fortin2013mixed}, yields for any $\v^\ast\in\IV$ that $\Pibdmdg \v^\ast \in \IV_h$ and $\norm{\Pibdmdg \v^\ast}_{\IXh} \leq \cbdmdg \norm{\v^\ast}_{\IXh}$ and the locking-free property follows with
    $$
    \inf_{\v_h\in \IVh }  \norm{\v-\v_h}_{\IXh} 
    \leq \norm{\v-\Pibdmdg \v}_{\IXh}   
    \leq \norm{\v-\v_h^\ast}_{\IXh} + \norm{ \Pibdmdg (\v-\v_h^\ast) }_{\IXh}
    \leq (1+\cbdmdg) \norm{\v-\v_h^\ast}_{\IXh}
    $$
    for any $\v\in\IV$.
    With $\div_{\Th} \v_h' = 0$ and $\jump{\v_h'}=0$ for all $\v_h'\in\IVh$ for \eqref{M3} and \eqref{M4} we have by partial integration $\inner{\nabla\phi, \v_h'} = 0$ for any $\phi\in H^1(\dom)$     
    which implies $\u_h\in\IWh$ and finally gradient-robustness.
\end{remark}

\section{Numerical examples} \label{sec:numerics}
We choose $\bbf$ such that the exact solution of \eqref{eq:strongpde} is given by
    $\u_\text{sol}=\sin(\pi x)\cos(\pi y)( -y , x )^T$
and solve on a disc with radius 1. The background flow is chosen as 
$\vel=0.1(-y,x)^t$
and $\rho=c_s=1,\lambda_\vel=10p^2,\ \lambda_\nx=100p^2$.
The results are shown in \Cref{fig:hconv} for varying mesh sizes and polynomial degree $p=1,2,3,4$.
The methods \eqref{M3} and \eqref{M4} show convergence of order $\mathcal O(h^{p+1/2})$ in the $L^2$-norm.
Effects of the different aspects of robustness discussed in \Cref{sec:robustness} can also be observed.
The method \eqref{M1} is not volume-locking free for low order which we observe here for $p=1,2$.
The method \eqref{M2} shows the same rates, however shows worse error than the other methods, since it lacks gradient-robustness.
\begin{figure}[!ht]
    \resizebox{\linewidth}{!}{
    \begin{tikzpicture} [spy using outlines={circle, magnification=4, size=2cm, connect spies}]
        \begin{groupplot}[%
          group style={%
            group name={my plots},
            group size=4 by 1,
            ylabels at=edge left,
          },
        legend style={
            legend columns=6,
            at={(-1.3,-0.2)},
            anchor=north,
            draw=none
        },
        xlabel={$h$},
        xlabel style={yshift=1.5ex},
        ylabel={$L^2$-error},
        ymajorgrids=true,
        grid style=dashed,
        cycle list name=paulcolors4,
        width=6cm,height=5cm,
        title style={yshift=-1.5ex},
        ]      
        \nextgroupplot[ymode=log,xmode=log,x dir=reverse, title={$p=1$}]
            \foreach \p in {1}{
                \addplot+[discard if not={p}{\p}] table [x=h, y=errorH1, col sep=comma] {results/hconv.csv};
                \pgfplotsset{cycle list shift=1}
                \addplot+[discard if not={p}{\p}] table [x=h, y=errorHdiv, col sep=comma] {results/hconv.csv};
                \addplot+[discard if not={p}{\p}] table [x=h, y=errorDG, col sep=comma] {results/hconv.csv};
            }
            \logLogSlope{1}{1}{0.8}{1.5}{black,dashed};
        \nextgroupplot[ymode=log,xmode=log,x dir=reverse, title={$p=2$}]
            \foreach \p in {2}{
                \addplot+[discard if not={p}{\p}] table [x=h, y=errorH1, col sep=comma] {results/hconv.csv};
                \addplot+[discard if not={p}{\p}] table [x=h, y=errorH1pp, col sep=comma] {results/hconv.csv};
                \addplot+[discard if not={p}{\p}] table [x=h, y=errorHdiv, col sep=comma] {results/hconv.csv};
                \addplot+[discard if not={p}{\p}] table [x=h, y=errorDG, col sep=comma] {results/hconv.csv};
            }
            \logLogSlope{1}{1}{0.8}{2.5}{black,dashed};
        \nextgroupplot[ymode=log,xmode=log,x dir=reverse, title={$p=3$}]
            \foreach \p in {3}{
                \addplot+[discard if not={p}{\p}] table [x=h, y=errorH1, col sep=comma] {results/hconv.csv};
                \addplot+[discard if not={p}{\p}] table [x=h, y=errorH1pp, col sep=comma] {results/hconv.csv};
                \addplot+[discard if not={p}{\p}] table [x=h, y=errorHdiv, col sep=comma] {results/hconv.csv};
                \addplot+[discard if not={p}{\p}] table [x=h, y=errorDG, col sep=comma] {results/hconv.csv};
            }
            \logLogSlope{1}{1}{0.8}{3.5}{black,dashed};
        \nextgroupplot[ymode=log,xmode=log,x dir=reverse, title={$p=4$}]
            \foreach \p in {4}{
                \addplot+[discard if not={p}{\p}] table [x=h, y=errorH1, col sep=comma] {results/hconv.csv};
                \addplot+[discard if not={p}{\p}] table [x=h, y=errorH1pp, col sep=comma] {results/hconv.csv};
                \addplot+[discard if not={p}{\p}] table [x=h, y=errorHdiv, col sep=comma] {results/hconv.csv};
                \addplot+[discard if not={p}{\p}] table [x=h, y=errorDG, col sep=comma] {results/hconv.csv};
            }
            \logLogSlope{1}{1}{0.8}{4.5}{black,dashed};
        \legend{H1 \eqref{M1}, H1 pseudo-pressure \eqref{M2}, $\Hdiv$ \eqref{M3}, DG \eqref{M4}, $\mathcal O(h^{p+1/2})$}
        \end{groupplot}
    \end{tikzpicture}}
    \vspace*{-0.5cm}  
    \caption{Convergence study in terms of mesh size $h$ for polynomial degrees $p=1,2,3,4$.}
    \label{fig:hconv}
\end{figure}
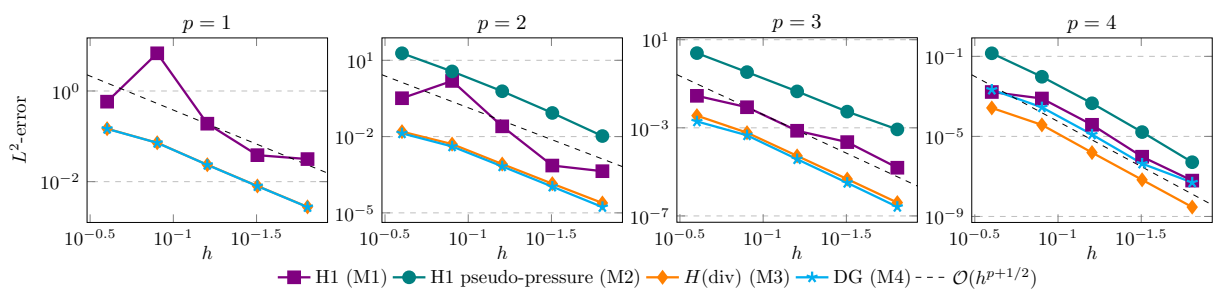

\bibliographystyle{siam}
\bibliography{proc}

\end{document}